\newtheorem{theorem}{Theorem}[section]
\newtheorem{proposition}{Proposition}[section]
\newtheorem{lemma}[theorem]{Lemma}
\newtheorem{definition}[theorem]{Definition}
\newtheorem{example}[theorem]{Example}
\newtheorem{remark}[theorem]{Remark}
\def\binomial#1#2{
\left(
\begin{matrix}
#1 \\ #2
\end{matrix}
\right)}
\def\U0{U_0({ G}l_N)}
\def\<{\langle}
\def\>{\rangle}
\begin{document}

\title{Bijective Enumeration of 3-Factorizations of an N-Cycle}
\author{Ekaterina Vassilieva}

\address{Laboratoire d'Informatique de l'Ecole Polytechnique, 91128 Palaiseau, FRANCE}

\email{ekaterina.vassilieva@lix.polytechnique.fr}


\subjclass{Primary 05A15; Secondary 05C10}

\keywords{cacti, permutations, cactus trees, Jackson formula, Harer-Zagier formula}

\date{\today}


\begin{abstract}
This paper is dedicated to the factorizations of the symmetric group. Introducing a new bijection for partitioned 3-cacti, we derive an elegant formula for the number of factorizations of a long cycle into a product of three permutations. As the most salient aspect, our construction provides the first purely combinatorial computation of this number.  
\end{abstract}

\maketitle

\section{Introduction}

Let $S_N$ be the symmetric group on $N$ symbols and $\gamma_N=(1 \, 2 \, \ldots \, N)$ be the long cycle in $S_N$. This paper is focused on the enumeration of the number of factorizations of $\gamma_N$  as a product of three permutations  $\alpha_1$, $\alpha_2$, $\alpha_3$ with a given number of cycles $n_1$, $n_2$, $n_3$. Factorizations in the symmetric group received significant attention over the past two decades. Among the most celebrated results in the field, Harer and Zagier in \cite{HZ} enumerated the factorization of the long cycle as a product of a permutations and a fixed point free involution. Later on, Jackson (\cite{J}) derived a more general formula enumerating the factorizations of a permutation into an arbitrary number of permutations with a given number of cycles. Both works rely on character theoretical techniques and leave little room for combinatorial interpretation. The first combinatorial proof of the Harer-Zagier formula was given recently by Lass in \cite{L}. In \cite{GN}, Goulden and Nica  developed another combinatorial proof relying on a direct bijection. The first combinatorial approach to the case involving two general permutations was developed by Goupil and Schaeffer in \cite{GS}. Subsequently, Schaeffer and Vassilieva used a bijective method in \cite{SV} to address the same issue. 
This later article relies on the direct correspondence between $2$-factorizations of a permutation and graphs with black and white vertices embedded in surfaces called {\it bicolored maps}. Our purpose is to start with a similar equivalence to solve the $3$-factor case by means of a combinatorial development. While at first glance the $2$-factor and the  $3$-factor cases seem to share common characteristics, a number of difficulties appear with the later case. As a result, a new methodology involving a particular set of {\it cactus trees} is developed.
The main result can be stated as follows:
\begin{theorem}
\label{th1}
The numbers $M(n_1,n_2, n_3, N)$ of factorizations of a long cycle $\gamma_N=(1 2 \ldots N)$ into three permutations with $n_1, n_2, n_3$ cycles  verify:  
\begin{eqnarray}
\nonumber&& \sum_{n_1,n_2,n_3\geq 1}\frac{M(n_1,n_2, n_3, N)}{{N!}^2 }x_1^{n_1}x_2^{n_2}x_3^{n_3}=\sum _{p_1, p_2, p_3 \geq 1} \binomial{x_1}{p_1}\binomial{x_2}{p_2}\binomial{x_3}{p_3}\phantom{lllllllllalaalalalal}\\
\nonumber&&\phantom{lalalalala} \times \binomial{N-1}{p_3-1}\sum_{a \geq 0} \binomial{N-p_2}{p_1-1-a}\binomial{N-p_3}{a}\binomial{N-1-a}{N-p_2}
\end{eqnarray}
\end{theorem}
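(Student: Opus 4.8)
The plan is to prove Theorem~\ref{th1} by adapting to $3$-cacti the bijective scheme that Schaeffer and Vassilieva used for $2$-factorizations in \cite{SV}. The first step is the standard reduction from \emph{ordinary} counting to \emph{partitioned} counting. A factorization $\gamma_N=\alpha_1\alpha_2\alpha_3$ is the monodromy datum of a connected labelled $3$-cactus on $N$ cells, in which the color-$i$ faces are the cycles of $\alpha_i$, so $n_i$ is the number of color-$i$ faces. Applying the classical change of variables $x^{n}=\sum_{p\ge 1}p!\,S(n,p)\binomial{x}{p}$ separately to $x_1^{n_1}$, $x_2^{n_2}$, $x_3^{n_3}$ replaces each permutation by the extra datum of an ordered partition of its cycles into $p_i$ nonempty blocks. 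The left-hand side of Theorem~\ref{th1} thereby becomes $\sum_{p_1,p_2,p_3\ge 1}\binomial{x_1}{p_1}\binomial{x_2}{p_2}\binomial{x_3}{p_3}\,R(p_1,p_2,p_3,N)/(N!)^2$, where $R(p_1,p_2,p_3,N)$ is the number of \emph{partitioned $3$-cacti} with block vector $(p_1,p_2,p_3)$, and since the $\binomial{x_i}{p_i}$ are linearly independent it suffices to prove
\begin{align*}
R(p_1,p_2,p_3,N) &= (N!)^{2}\binomial{N-1}{p_3-1}\\
&\quad\times\sum_{a\ge 0}\binomial{N-p_2}{p_1-1-a}\binomial{N-p_3}{a}\binomial{N-1-a}{N-p_2}.
\end{align*}

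The heart of the argument is a bijection $\Theta$ that I would build between partitioned $3$-cacti and triples consisting of a \emph{cactus tree} --- a plane, tree-like structure carried by the three families of blocks, with $p_3-1$ edges encoding the tree shape on the color-$3$ blocks and $N$ half-edges recording the cells --- together with two auxiliary sequences of labels drawn from $[N]$ that record the cyclic orderings and block contents that are collapsed when one contracts each color-$1$ and each color-$2$ block to a single point and reads off the resulting structure along $\gamma_N$. These two sequences are exactly what produces the factor $(N!)^2$. The two delicate points are: (i) that the contracted object always has the prescribed tree-like combinatorial type, which should follow from the connectedness of the cactus together with the Euler relation $n_1+n_2+n_3=N+1-2g$ forcing the genus to collapse after contraction; and (ii) that the two stored sequences contain precisely enough information --- neither too much nor too little --- to re-expand each contracted vertex uniquely, so that $\Theta$ is a bijection and not merely a surjection.

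With the bijection in hand, the remaining task is to enumerate the decorated cactus trees. After factoring off the two free label sequences (the $(N!)^2$), one is left with a pure tree-enumeration problem, which I would resolve either by a Lagrange-inversion computation on an appropriate multivariate generating function or --- more in the spirit of the "purely combinatorial" claim of the abstract --- by a further bijection with sequences of lattice-path type. In that count, $\binomial{N-1}{p_3-1}$ records the placement of the $p_3$ color-$3$ blocks along the cycle, while the inner summation parameter $a$ measures how many color-$1$ blocks interleave with color-$3$ material rather than color-$2$ material, the three factors $\binomial{N-p_2}{p_1-1-a}$, $\binomial{N-p_3}{a}$ and $\binomial{N-1-a}{N-p_2}$ being the numbers of ways to distribute the remaining cells in the three resulting regimes.

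The main obstacle is Step~2. In the $2$-factor case the contracted object is literally a bicolored tree, and the reconstruction is almost immediate; here the $3$-cactus does not collapse to a tree directly, so one must first isolate the correct intermediate notion of \emph{cactus tree} and then prove that the forgetful map onto it, decorated with the two label sequences, is a genuine bijection. Establishing invertibility --- that every decorated cactus tree arises from exactly one partitioned $3$-cactus, with no residual ambiguity in the re-expansion of vertices --- is where essentially all of the combinatorial difficulty lies; the reduction of Step~1 and the enumeration of Step~3 are comparatively routine once the bijection is secured.
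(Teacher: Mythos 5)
Your overall strategy coincides with the paper's: pass to partitioned $3$-cacti via the Stirling-number change of basis, biject them with cactus trees carrying auxiliary data, and then count the trees. Step~1 is fine. But the heart of the argument --- the bijection $\Theta$ --- is left entirely unconstructed, and where your sketch is specific it does not match what is actually needed, so this is a genuine gap rather than a routine omission. First, the image of the bijection is not ``a cactus tree plus two free label sequences from $[N]$'' contributing $(N!)^2$: the correct image is a $7$-tuple consisting of a cactus tree together with three unordered subsets (of sizes depending on $p_1,p_2,p_3$ and on the triangle counts of the tree), one ordered subset disjoint from the first, and two permutations of sets of sizes $N-p_1+1-p_3+c$ and $N-p_2-p_3+b$; the factor $(N!)^2$ only emerges after summing over the triangle parameters $(a,b,c)$ and collapsing a product of several binomial coefficients and factorials. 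Second, the relevant cactus trees are not plane trees on the color-$3$ blocks with $N$ half-edges: they are $3$-colored trees on all $p_1+p_2+p_3$ blocks, augmented by triangles (non-tree edges joining a vertex to the rightmost descendant of one of its children), and the summation index $a$ in the theorem is the number of triangles rooted at grey vertices --- not an interleaving statistic of color-$1$ versus color-$3$ blocks. Without the triangles the tree count is wrong and the inner sum over $a$ cannot be produced.

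Third, your proposed mechanism for extracting the tree (contract the color-$1$ and color-$2$ blocks and invoke the Euler relation to force the genus to collapse) does not by itself yield a well-defined plane tree; the construction that works attaches to each block a distinguished ``last passage'' vertex, defined via three traversal labelings of the unique white face, and proves treeness by showing that the block maxima are strictly monotone along every branch (increasing towards the root). This same structure is what makes the inverse map work: the relabeling permutations are reconstructed element by element around $\gamma_N$, consuming exactly the data in the three subsets, the ordered set and the two partial permutations; two unconstrained length-$N$ sequences would carry the wrong amount of information, and the cardinalities would not match. Finally, the closing enumeration is not routine either: one must count cactus trees with prescribed $(a,b,c)$ by multivariate Lagrange inversion and then carry out a nontrivial Vandermonde resummation over $b$ and $c$ to reach the stated single sum over $a$. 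In short, the plan points in the right direction, but every load-bearing component of the proof still has to be built.
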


{\bf Outline of the paper and additional notations\\}
\indent The goal of this paper is to give a bijective proof of the result stated above.
In the following sections, we introduce the combinatorial objects used as the basis of our construction. Then we describe the bijective mapping and prove it is indeed one-to-one.\\
\indent In what follows, we note $P_n(A)$ (resp. $OP_n(A)$) the set of unordered (resp. ordered) subsets of set $A$ containing exactly $n$ elements.

\section{Maps, Constellations and Cacti}
Maps can be defined as $2-$cell decompositions of an oriented surface into a finite number of vertices ($0-$cells), edges ($1-$cells) and faces ($2-$cells) homeomorphic to open discs (see \cite{LZ} for more details about maps and their applications). They are defined up to an homeomorphism of the surface that preserves its orientation, the type of cells and incidences in the graph.  A map is bicolored if the vertices are colored black and white such that two adjacent vertices have different colors. Bicolored maps are also known as {\it hypermaps}.\\ 
\indent While $2$-factorizations of a permutation are easily interpreted as bicolored maps, $m$-factorizations ($m \geq 3$) can be represented as {\it $m$-constellations} (see \cite{LZ}). Within a topological point of view, $m$-constellations are specific maps with black and white faces such that all the black faces are $m$-gons and not adjacent to each other. The degree of the white faces is a multiple of $m$. Often (see \cite{BMS}), constellations are supposed to be planar maps (embedded in a surface of genus $0$). In this paper we suppose that they can be embedded in a surface of any genus. Besides, we consider only {\it rooted} constellations, i.e. constellations with a marked edge. We assume as well that within each $m$-gon the $m$ vertices are colored with $m$ distinct colors so that moving around the $m$-gons clockwise the vertex of color $i+1$ (modulo $m$) follows the vertex of color $i$. A constellation with only one white face is usually called a {\it cactus}. Figure 1 shows a $3$-cactus embedded in the sphere (genus $0$) and two $3$-cacti embedded in a surface of genus $1$.

\begin{proposition}
Rooted $m$-cacti with $N$ black $m$-gons are in bijection with $m$-tuples of permutations $\alpha_1$, $\alpha_2$, ..., $\alpha_m$ in $S_N$ such that $\alpha_1\alpha_2...\alpha_m = \gamma_N$
\end{proposition}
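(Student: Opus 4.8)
The plan is to encode a rooted $m$-cactus combinatorially as a collection of permutations, one per color, and to read off the long-cycle condition from the rooting. First I would recall the standard encoding of a rooted map by a pair of permutations acting on its set of darts (half-edges): the vertex permutation $\sigma$, whose cycles are the darts around each vertex listed in the cyclic order induced by the surface orientation, and the edge involution. For an $m$-constellation the natural object to use instead is the set of black $m$-gons, each carrying $m$ ``corners'' colored $1,\dots,m$ in the prescribed clockwise cyclic order; since there are $N$ black $m$-gons, I label them $1,\dots,N$ and think of the $N$ corners of a fixed color $i$ as the ground set on which $\alpha_i$ will act. The key observation is that the white faces record exactly how the $m$-gons are glued: walking along the boundary of a white face one passes, between a corner of color $i$ and the next corner of color $i$ encountered, through exactly one $m$-gon, and this ``next $m$-gon of the same color'' rule defines a permutation $\alpha_i \in S_N$ for each $i$.

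Next I would make the rooting do its job. The marked edge distinguishes one corner, hence one $m$-gon and one color; after relabeling we may assume the root sits at the corner of color $1$ of $m$-gon $1$, and that the labels $1,2,\dots,N$ of the $m$-gons of color $1$ are assigned by following the unique white-face boundary starting from the root. With this normalization, traversing the boundary of the white face visits the color-$1$ corners in the order $1,2,\dots,N$ and returns to $1$, which is precisely the statement that the composite permutation $\alpha_1\alpha_2\cdots\alpha_m$ equals the long cycle $\gamma_N=(1\,2\,\ldots\,N)$. (One must fix once and for all the convention — product left-to-right vs. right-to-left, and clockwise vs. counterclockwise traversal — so that the cumulative gluing map around a white face is literally $\alpha_1\cdots\alpha_m$; with the paper's convention that color $i+1$ follows color $i$ clockwise inside each $m$-gon, the composition order is forced.)

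To finish, I would exhibit the inverse map: given permutations $\alpha_1,\dots,\alpha_m\in S_N$ with $\alpha_1\cdots\alpha_m=\gamma_N$, take $N$ abstract $m$-gons with colored corners, glue corner $i$ of $m$-gon $k$ to corner $i$ of $m$-gon $\alpha_i(k)$ along color-$i$ edges, and check that the resulting surface (built by the standard polygon-gluing construction, with faces determined by the orbits of the boundary-walking permutation) is connected — connectivity follows because $\langle \alpha_1,\dots,\alpha_m\rangle$ acts transitively on $\{1,\dots,N\}$, being a group containing the $N$-cycle $\gamma_N$ — and has a single white face, namely the one whose boundary is the $\gamma_N$-orbit. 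Rooting at corner $1$ of $m$-gon $1$ recovers a rooted $m$-cactus, and the two constructions are mutually inverse by inspection. The main obstacle, and the step deserving the most care, is the bookkeeping in the second paragraph: verifying that the boundary-walking permutation of the white face, expressed in terms of the $\alpha_i$'s and the cyclic color order inside the $m$-gons, is exactly $\alpha_1\alpha_2\cdots\alpha_m$ and not some conjugate or reversal — i.e. pinning down the orientation and composition conventions so that ``one white face'' becomes ``$\alpha_1\cdots\alpha_m$ is an $N$-cycle'' on the nose.
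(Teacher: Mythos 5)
Your proposal is correct and follows essentially the same route as the paper: you take the $N$ black $m$-gons as the ground set, label them by the order in which the unique white face's boundary meets them starting from the root, define each $\alpha_i$ as the ``next $m$-gon around a color-$i$ vertex'' permutation, and deduce $\alpha_1\cdots\alpha_m=\gamma_N$ from the cyclic color order, with the inverse given by the standard polygon-gluing construction. The only cosmetic difference is that you phrase connectivity explicitly via transitivity of $\langle\alpha_1,\dots,\alpha_m\rangle$, whereas the paper observes directly that the white-face traversal visits every $B_k$; these are the same fact.
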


\begin{proof}[Proof(sketch)]
Let $C$ be a $m$-cactus with $N$ black faces of degree $m$ and one white face $W$ of degree $mN$ (as black faces have no adjacent edges, each edge belongs to exactly one black face as well as to the white face). We assume that the root edge links a vertex of color $1$ and a vertex of color $m$. Moving around the $mN$ vertices of $W$ counter-clockwise starting with the vertex of color $1$ of the root edge, the various edges connecting couples of vertices of color $1$ and $m$ are traversed exactly once. We denote $B_i$ the black face containing the $i'th$ such edge met. Then, we consider the permutations ${(\alpha_k)}_{1\leq k \leq m}$ in $S_N$ mapping $i$ to $j$ if and only if $B_i$ and $B_j$ have a vertex $v$ of color $k$ in common and $B_j$ follows $B_i$ in counter-clockwise order around $v$. According to the cyclic order of the vertices' colors, the permutation $\alpha_1\alpha_2...\alpha_m$ maps $i$ to $i+1$ (modulo $N$) for $1\leq i \leq m$. In other words, $\alpha_1\alpha_2...\alpha_m = \gamma_N$.\\
\noindent Conversely, given a $m$-tuple of permutations $\alpha_1\alpha_2...\alpha_m$ in $S_N$ such that $\alpha_1\alpha_2...\alpha_m = \gamma_N$ we can define a $m$-cactus following the construction rules below :
\begin{itemize}
\item Let $B_1$, $B_2$, ...,$B_N$ be $N$ $m$-gons whose vertices' colors are $1$, $2$, ..., $m$ in clockwise order. 
\item We use the permutations $\alpha_i$ to merge the polygons' vertices of the same color so that moving around a vertex $v$ of color $j$ counter-clockwise, $B_k$ is following $B_l$ if $\alpha_j(l)=k$.
\end{itemize}
Then, starting from $B_1$, follow the edges of the polygons linking the vertices of colors $1$ and $m$, $m$ and $m-1$, ..., $2$ and $1$, $1$ and $m$, etc. The edge $e'$ linking vertices $v'$ and $v"$ of colors $i$ and $i-1$ follows the edge $e$ linking $v$ of color $i+1$ and $v'$ if $e'$ follows $e$ in counter-clockwise order around $v'$. As  $\alpha_1\alpha_2...\alpha_m = \gamma_N$, we traverse all the $B_k$ and come back to $B_1$. This traversal defines the edges of the unique white face of degree $Nm$.   \end{proof}
 \begin{figure}[h]
 \label{cacspher}
\begin{center}
\includegraphics[width=40mm]{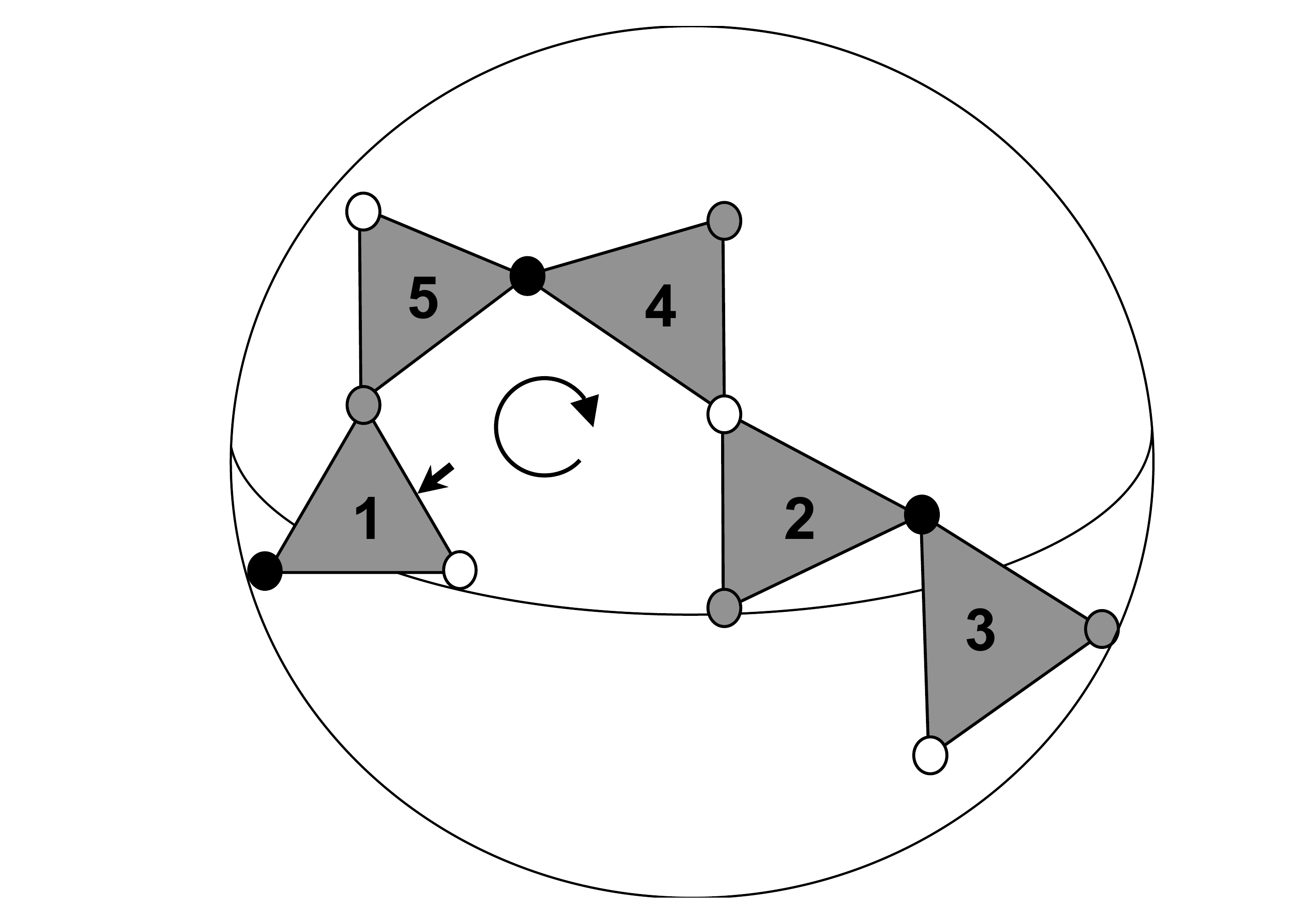}
\includegraphics[width=30mm]{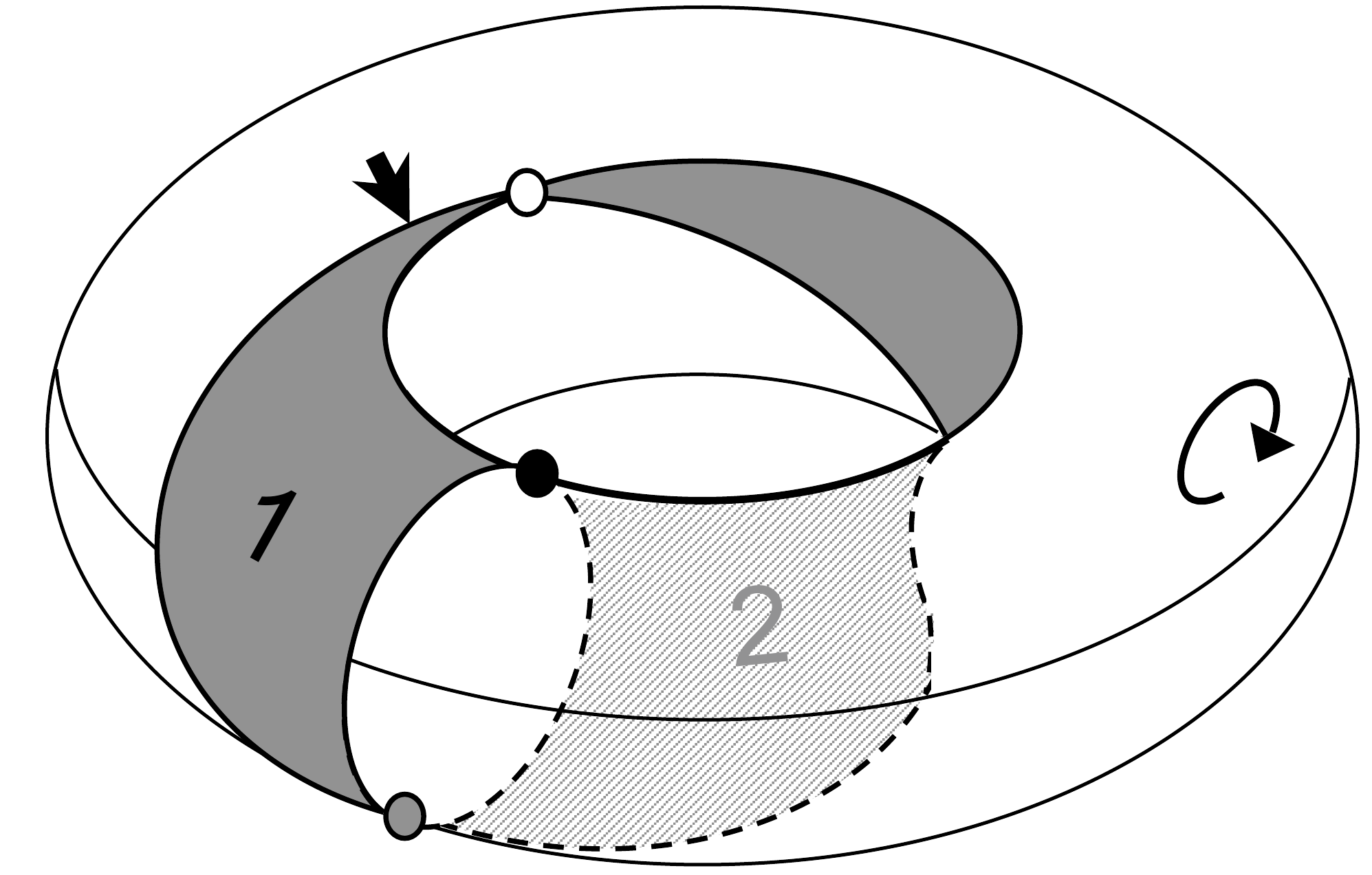}\hspace{5mm}
\includegraphics[width=30mm]{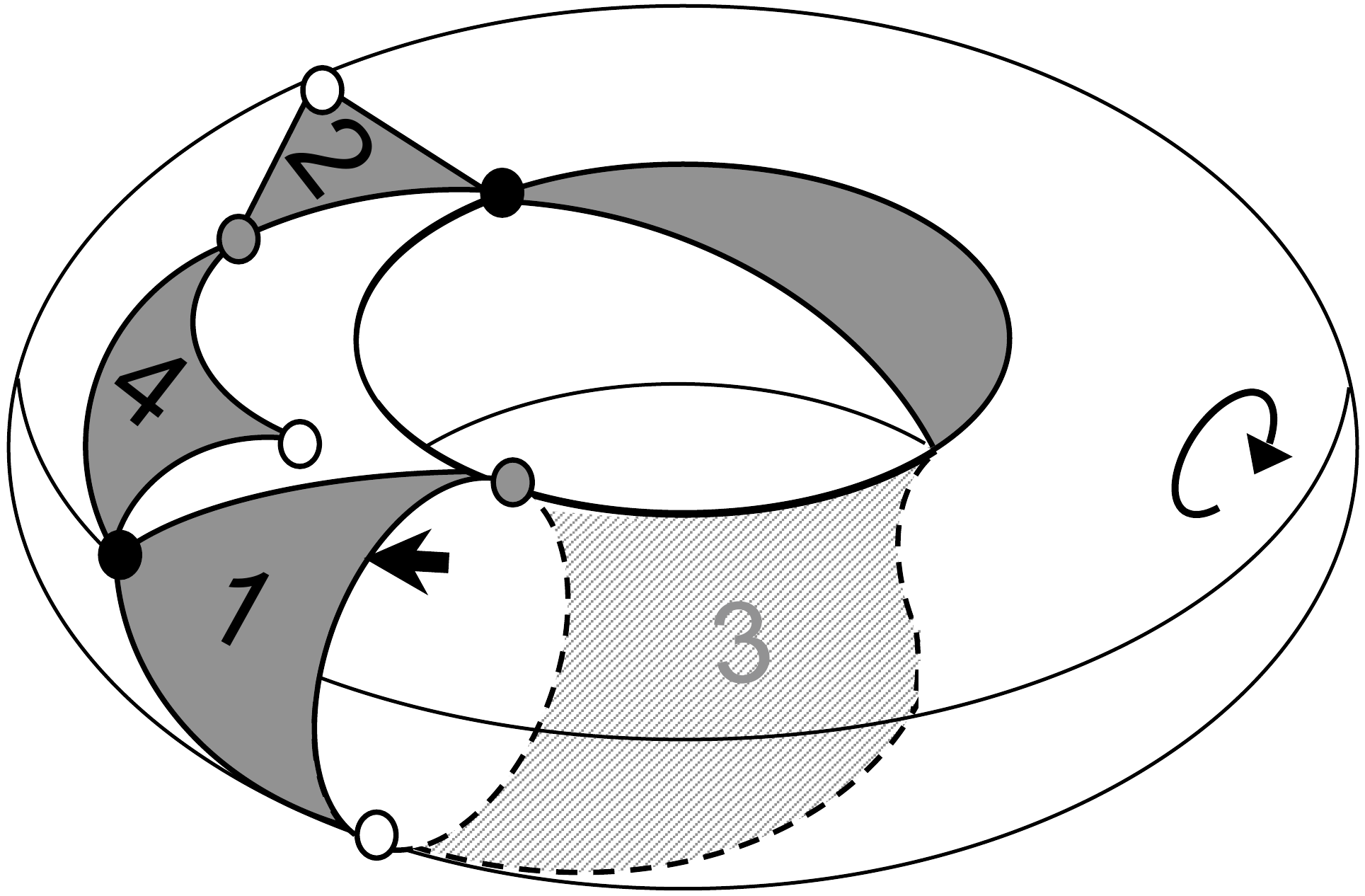}
\caption{A cactus embedded in the sphere and two examples of cacti embedded in a surface of genus 1}
\end{center} 
\end{figure}
\begin{example}
The cactus on the left hand side of figure 1 can be associated to the three permutations $\alpha_1 = (1)(24)(3)(5),\; \alpha_2 = (1)(23)(45),\; \alpha_3 = (15)(2)(3)(4)$.
\end{example}


\begin{example}
The cactus in the middle in figure 1 can be associated to the three permutations $\alpha_1 = (12),\; \alpha_2 = (12),\; \alpha_3 = (12)$.
The cactus on the right hand side corresponds to the three permutations $\alpha_1 = (13)(2)(4),\; \alpha_2 = (14)(23),\; \alpha_3 = (13)(24)$.
\end{example}



%
\section{Partitioned Cacti and Cactus Trees}
\subsection{Partitioned 3-Cacti}
As non recursive and non planar objects, cacti are very difficult to compute directly and so far no direct method
to enumerate general sets of cacti with given numbers of vertices of each color has been found. To overcome this issue, we introduce a new set of objects we named {\it Partitioned Cacti}. As shown below, there exists a simple relation between the cardinalities of sets of partitioned cacti and sets of traditional cacti. The point of this paper is to show that partitioned cacti can be interpreted via a bijective mapping as cactus trees and some simple combinatorial objects (permutations, ordered and unordered subsets of $[N]$). Being planar and recursive, cactus trees can be computed using the Lagrange theorem for implicit functions.
Formally, we have the following definition:\\
\begin{definition}
Let $ CC(p_{1}, p_{2}, p_{3}, N)$ be the set of $5$-tuples  $(\pi_{1}, \pi_{2}, \pi_{3}, \alpha_{1}, \alpha_{2})$ such that $\pi_{1}$,$\pi_{2}$ and $\pi_{3}$
are partitions of $\left[N\right]$
into $p_{1}$,  $p_{2}$ and  $p_{3}$ blocks and $\alpha_{1}$, $\alpha_{2}$ are permutations of $S_{N}$ such that each block of $\pi_{1}$ (resp. $\pi_{2}$, $\pi_{3}$)  is the union of cycles of $\alpha_{1}$ (resp. $\alpha_{2}$, $\alpha_{3}=\alpha_{2}^{-1}\circ\alpha_{1}^{-1}\circ\gamma_{N}$)
Any such $5$-uple is called a {\it partitioned $3$-cactus} with $N$ triangles, $p_{1}$ white,  $p_{2}$ black and  $p_{3}$ grey blocks.
\end{definition}


\begin{example}
\label{ex: pc1}
 As an example, we add partitions $\pi_{1}=\{ \pi_{1}^{(1)}, \pi_{1}^{(2)}\}, \pi_{2}=\{\pi_{2}^{(1)}, \pi_{2}^{(2)}\}$,
$\pi_{3}=\{\pi_{3}^{(1)}, \pi_{3}^{(2)}\}$, where
$\pi_{1}^{(1)}=\{2, 4, 5\},\,\,\,  \pi_{1}^{(2)}=\{1, 3\}, \; \pi_{2}^{(1)}=\{1, 2, 3\},\,\,\, \pi_{2}^{(2)}=\{4, 5\}\; \pi_{3}^{(1)}=\{3\},\,\,\, \pi_{3}^{(2)}=\{1, 2, 4, 5\}$, 
to the cactus of figure 1 in order to get the partitioned cactus $(\pi_{1}, \pi_{2}, \pi_{3}, \alpha_{1}, \alpha_{2})  \in {CC}({2, 2, 2, 5})$ depicted on figure~\ref{fig : ex}.
Similarly to \cite{SV}, we associate a particular shape to each of the blocks of the partitions.
\begin{figure}[h]
\begin{center}
\includegraphics[width=40mm]{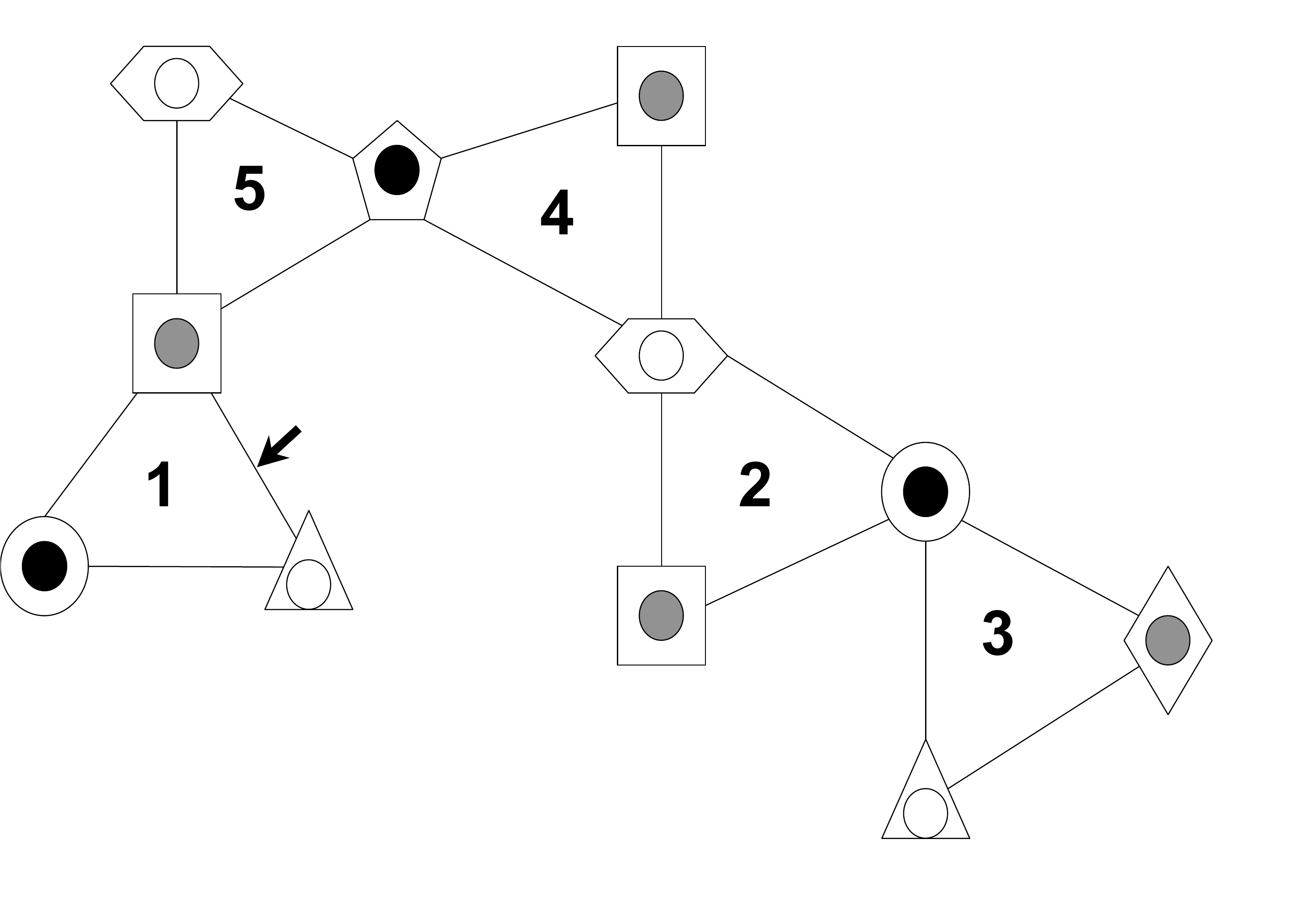}
\caption{Example of a Partitioned 3-Cactus}
\label{fig : ex}
\end{center} 
\end{figure}
\end{example}

\noindent {\bf{Link with cacti}}\\
Partitioned $3$-cacti and $3$-cacti are linked through the following formula :
\begin{eqnarray}
 \mid CC(p_1, p_2, p_3, N) \mid = \sum_{n_i  \geq p_i }S(n_1,p_1)S(n_2,p_2)S(n_3,p_3)M(n_1, n_2, n_3, N)
\end{eqnarray}
where $S(a,b)$, the Stirling number of the second kind, gives the number of partitions of a set of $a$ elements into $b$ nonempty, unordered sets. Using $\sum_{b=1}^{a}S(a,b)(x)_{b}$ $= x^{a}$ (see e.g~\cite{SW}) where the falling factorial $(x)_l = x(x-1)\ldots(x-l+1)$:
\begin{eqnarray}
\sum_{n_i \geq 1}M(n_1, n_2, n_3,N)x_1^{n_1}x_2^{n_2}x_3^{n_3} =\sum _{p_i \geq 1}\mid  CC(p_1, p_2, p_3, N)\mid (x_1)_{p_1}(x_2)_{p_2}(x_3)_{p_3}
\end{eqnarray}
In order to prove our main theorem, we focus on a bijective description of partitioned cacti. As stated above, our construction
relies on a particular set of cactus trees described afterwards in this section.

\subsection{White, Black and Grey Traversals of a Partitioned 3-Cacti and Last Passage}
\label{LastPassage}
We introduce the notion of white, black and grey traversals of a partitioned 3-cacti, a critical tool for the bijection.
Moving around the white face of the cactus according to the surface orientation and starting with the white to grey edge of the root, we define three new labelings of the triangles. We assign "white" (resp. "black", "grey") label $i$ (resp. $j$, $k$) to the triangle containing the $i$'th (resp. $j$'th, $k$'th)  edge linking a white and a grey (resp. a black and a white, a grey and a black) vertex. This three labeling of the faces will be called respectively white, black and grey traversal labeling. 
\begin{lemma}
\label{wbgt}
It can be shown that if $i$ is the initial label of a triangle, $i$ is as well its white traversal label, $\alpha_3^{-1}\alpha_2^{-1}(i)$ and
$\alpha_3^{-1}(i)$ are respectively its black and grey traversal labels.
\end{lemma}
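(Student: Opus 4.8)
The plan is to trace the three traversals against the construction of the cactus from the permutation triple $(\alpha_1,\alpha_2,\alpha_3)$, where $\alpha_1\alpha_2\alpha_3=\gamma_N$, and to identify, for each traversal, the edge of a given triangle that the traversal designates. Recall from the proof of the Proposition that moving around the single white face counter-clockwise starting at the white-to-grey edge of the root, one meets each of the three distinguished edge-types (white--grey, black--white, grey--black) of each triangle exactly once; the initial label $i$ of a triangle is by definition the rank, in this walk, at which its white--grey edge is first met. So the white traversal label of a triangle is, tautologically, its initial label $i$; this disposes of the first assertion.

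For the black traversal, I would argue as follows. Fix a triangle $B$ with initial label $i$. Its black--white edge is the edge met immediately after its white--grey edge if one continued along the \emph{same} triangle; but the white-face walk does not do that — after leaving the white--grey edge of $B$ it enters the white--grey edge of the \emph{next} triangle around the shared white vertex, which by the construction rule is $\alpha_1(i)$ (or its inverse, depending on the orientation convention fixed in the Proposition's proof). Iterating this bookkeeping, the walk reaches the black--white edge of $B$ only after having cycled through a block of white--grey edges, and the rank at which this happens is governed by the word $\gamma_N = \alpha_1\alpha_2\alpha_3$ read in reverse: the black--white edge of the triangle labelled $i$ is encountered at the step that, in terms of the white labelling, corresponds to $\alpha_3^{-1}\alpha_2^{-1}(i)$. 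Concretely I would set up the permutation $\beta$ sending the initial label of a triangle to the white-traversal rank of its black--white edge, show directly from the merging rules that $\beta = \alpha_3^{-1}\alpha_2^{-1}$ (equivalently $\beta^{-1} = \alpha_2\alpha_3 = \alpha_1^{-1}\gamma_N$), and similarly show that the permutation sending $i$ to the rank of the grey--black edge of triangle $i$ equals $\alpha_3^{-1}$.

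The cleanest route is probably to work with the three "corner" permutations around white, black, grey vertices simultaneously: the white-face walk visits, in cyclic order, a white--grey edge, then a black--white edge, then a grey--black edge, then a white--grey edge again, and so on. Passing from the white--grey edge of triangle $i$ to the next black--white edge corresponds to applying $\alpha_1$ (moving around the white vertex), then from that black--white edge to the next grey--black edge corresponds to applying $\alpha_2$, and from that grey--black edge back to a white--grey edge corresponds to applying $\alpha_3$; composing gives $\gamma_N$, consistent with the Proposition. Reading this chain backwards from the white--grey edge of $i$ expresses the black-traversal and grey-traversal labels as $\alpha_3^{-1}\alpha_2^{-1}(i)$ and $\alpha_3^{-1}(i)$ respectively.

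The main obstacle I anticipate is purely orientational: pinning down, once and for all, whether each elementary move around a vertex applies $\alpha_k$ or $\alpha_k^{-1}$, and whether "follows in counter-clockwise order" in the Proposition's construction matches the direction in which the white-face walk sweeps past that vertex. A sign error here flips every permutation to its inverse and breaks the statement. I would resolve it by carefully re-deriving the local picture at a single vertex — exactly as in the Proposition's proof, where $\alpha_1\alpha_2\cdots\alpha_m=\gamma_N$ forced the labels $1,\dots,m$ to advance cyclically — and then checking the whole claim against the genus-$0$ example of Figure 1, where $\alpha_1=(1)(24)(3)(5)$, $\alpha_2=(1)(23)(45)$, $\alpha_3=(15)(2)(3)(4)$, so that $\alpha_3^{-1}\alpha_2^{-1}$ and $\alpha_3^{-1}$ can be computed explicitly and matched against the three traversals read off the picture.
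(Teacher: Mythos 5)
The paper never actually writes out a proof of this lemma (it is asserted with ``It can be shown that\ldots''), so the only thing to check your argument against is the construction in the proof of Proposition 2.1 --- and there your local picture of the white-face walk is wrong in a way that matters. The converse construction in that proof says explicitly that the walk traverses the edges ``linking the vertices of colors $1$ and $m$, $m$ and $m-1$, \ldots, $2$ and $1$, $1$ and $m$, etc.'', i.e.\ for $m=3$ the cyclic order of edge types is white--grey, then grey--black, then black--white, the successive pivots being the grey, black and white vertices. Starting from the white--grey edge of $B_i$ the walk therefore reaches the grey--black edge of $B_{\alpha_3(i)}$, then the black--white edge of $B_{\alpha_2\alpha_3(i)}$, then the white--grey edge of $B_{\alpha_1\alpha_2\alpha_3(i)}=B_{i+1}$; inverting gives exactly the grey label $\alpha_3^{-1}(j)$ and the black label $\alpha_3^{-1}\alpha_2^{-1}(j)$ of the statement. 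Your ``cleanest route'' paragraph instead posits the order white--grey, black--white, grey--black with pivots applying $\alpha_1$, $\alpha_2$, $\alpha_3$ in that order; carried out, that yields black label $\alpha_1^{-1}(j)$ and grey label $\alpha_1^{-1}\alpha_2^{-1}(j)$, which is not the claim, and it is inconsistent with the hypothesis since it would force $\alpha_3\alpha_2\alpha_1=\gamma_N$ rather than $\alpha_1\alpha_2\alpha_3=\gamma_N$. The same slip appears earlier when you say that after the white--grey edge of $B$ the walk ``enters the white--grey edge of the next triangle around the shared white vertex, which \ldots is $\alpha_1(i)$'': consecutive white--grey edges belong to $B_i$ and $B_{i+1}$ and are separated by two edges of the other two types; they are not related by $\alpha_1$.

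So the gap is exactly the orientation/ordering issue you flag at the end but do not resolve: the paragraph that asserts the correct formulas gives no derivation (``iterating this bookkeeping''), while the paragraph that does give a derivation arrives at the wrong permutations. The fix is to read the edge order and the pivot colors off the displayed sentence in the proof of Proposition 2.1 rather than reconstructing them; once the order white--grey, grey--black, black--white is in place, the identities ``the $i$'th grey--black edge lies in $B_{\alpha_3(i)}$ and the $i$'th black--white edge lies in $B_{\alpha_2\alpha_3(i)}$'' follow by a one-line induction on $i$, and the lemma is immediate. Your suggestion of checking against the genus-$0$ cactus of Figure 1, where $\alpha_3^{-1}=(15)$ and $\alpha_3^{-1}\alpha_2^{-1}$ can be computed by hand, is a good safeguard and would have caught the reversal.
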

\begin{example}
Figure \ref{wbgtraversals} depicts these three traversals for the partitioned cactus in example \ref{ex: pc1}.
\begin{figure}[h]
\begin{center}
\includegraphics[width=40mm]{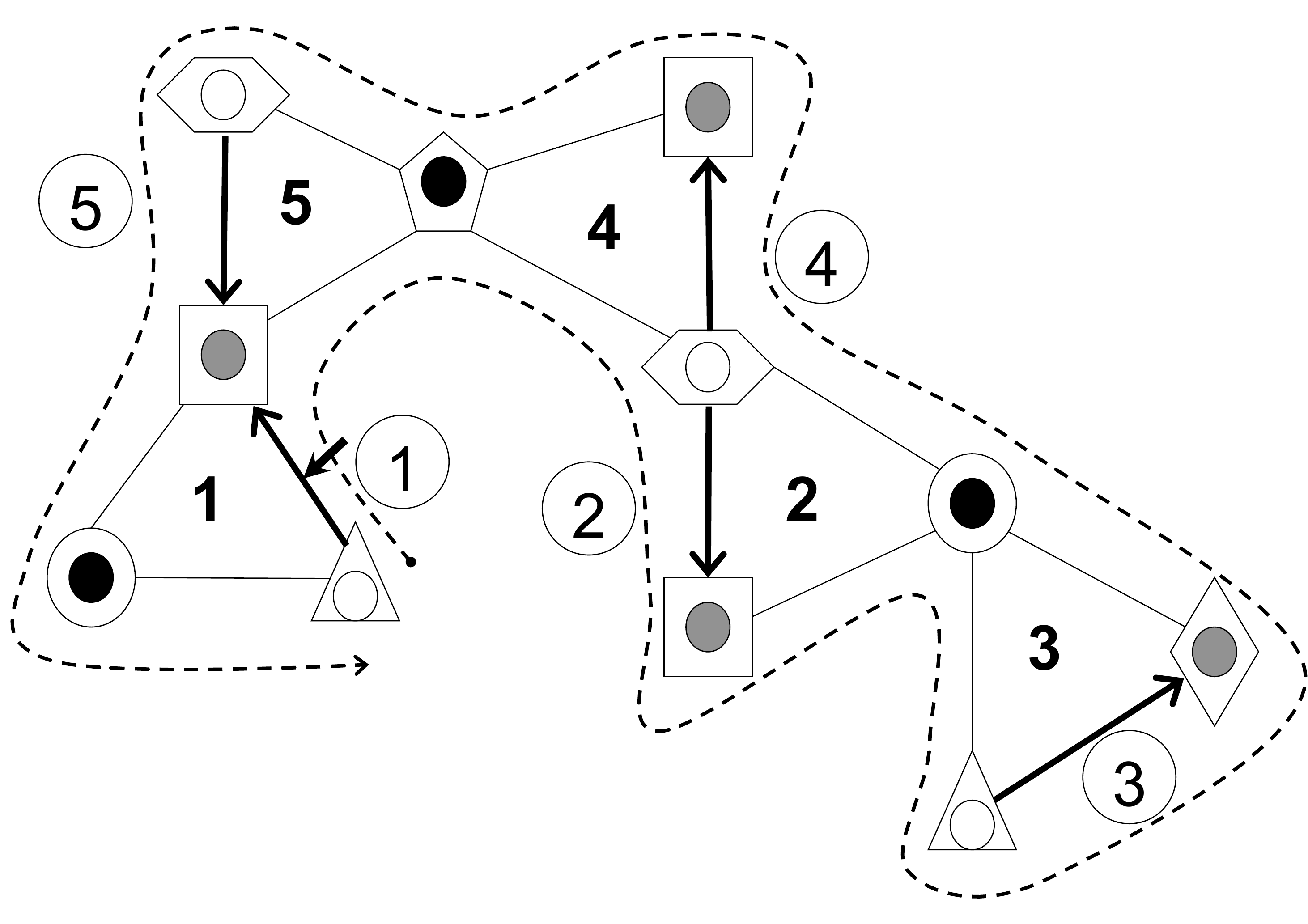}\
\includegraphics[width=40mm]{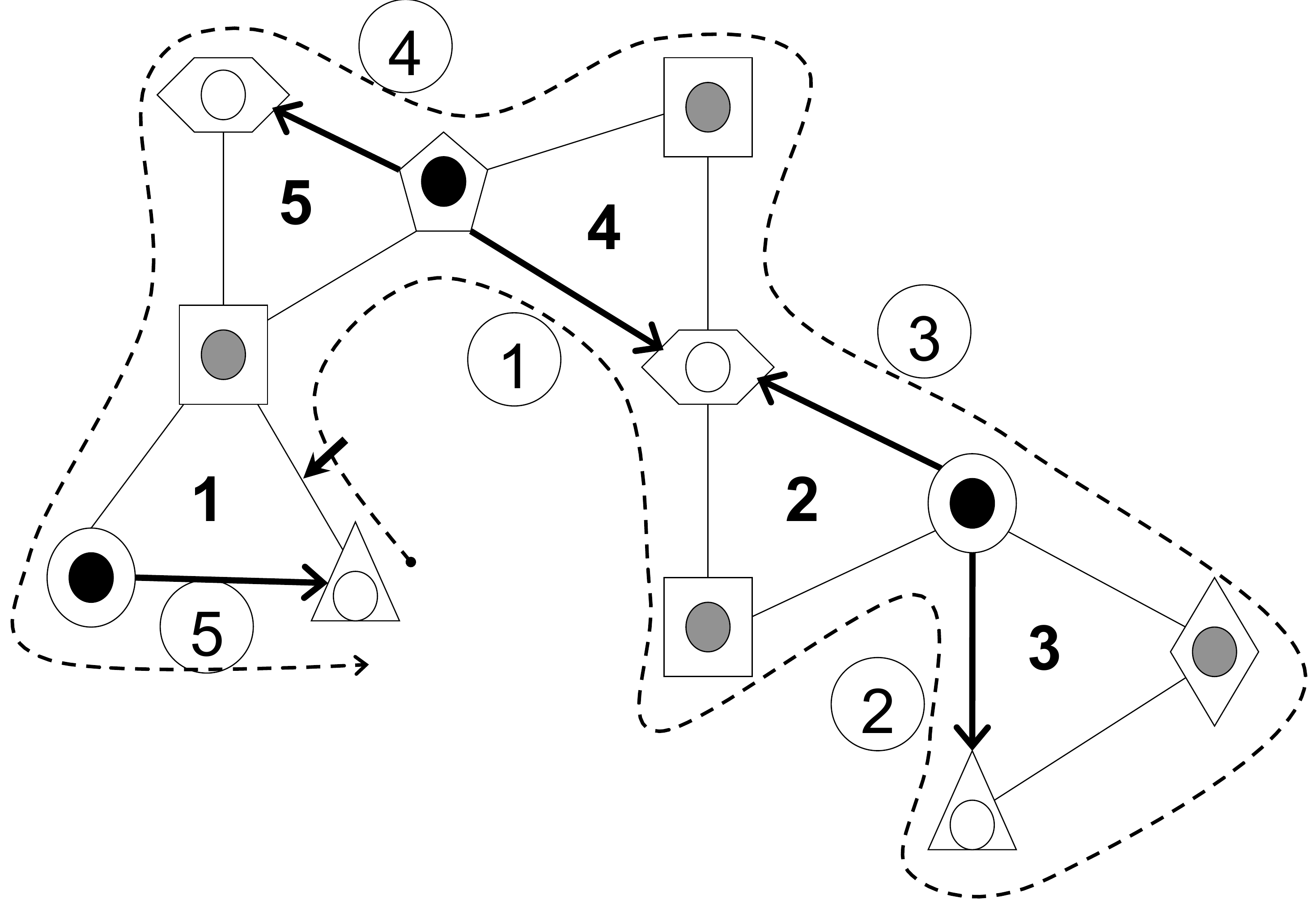}\
\includegraphics[width=40mm]{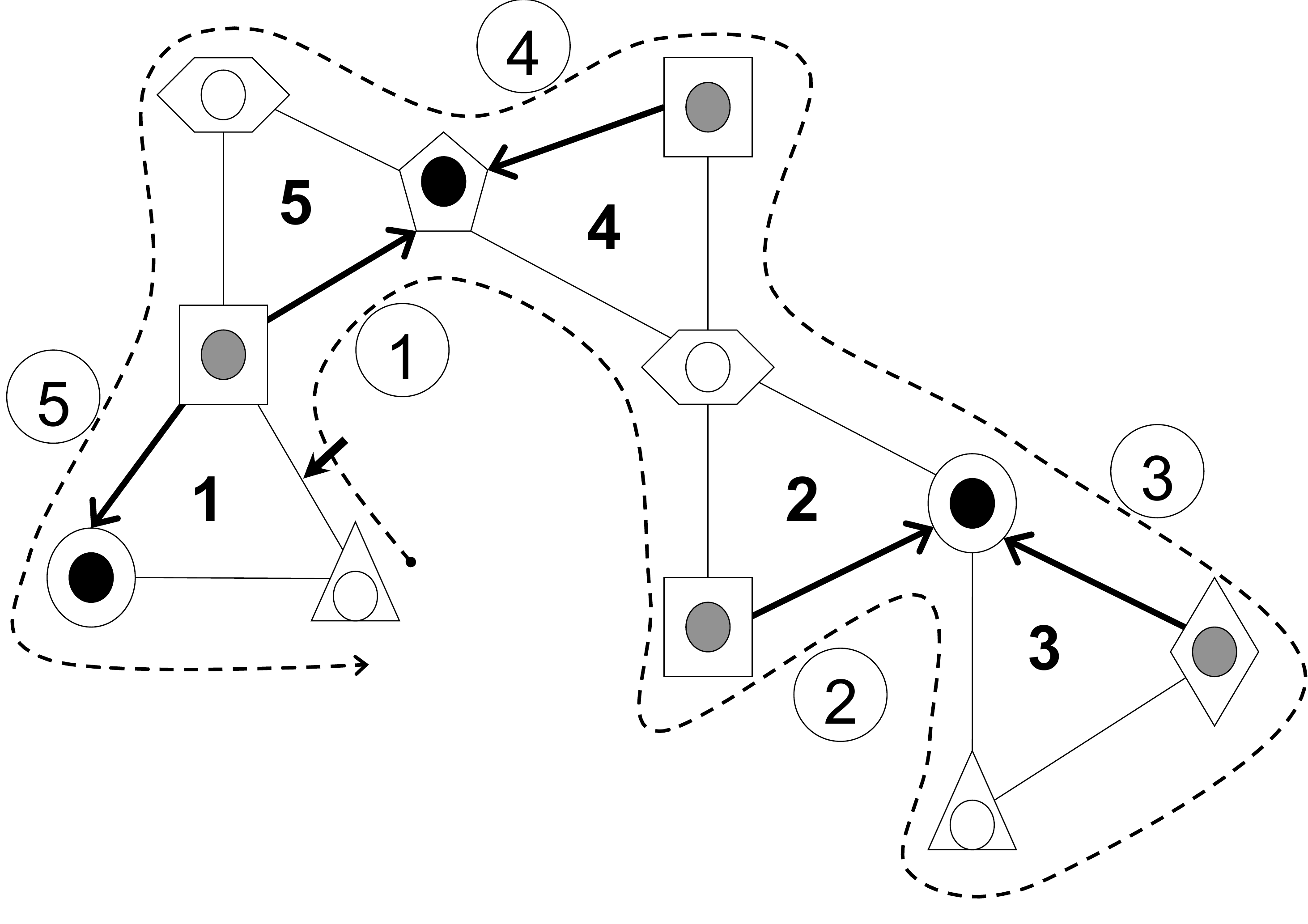}
\caption{White, black and grey traversals of a partitioned cactus}
\label{wbgtraversals}
\end{center} 
\end{figure}
\end{example}
Moreover, we define for any block of $\pi_1$ (resp. $\pi_2$, $\pi_3$), a {\it last passage} white (resp. black, grey) vertex. 
Let $\pi_{1}^{(1)}, \ldots,\pi_{1}^{(p_{1})}$, $\pi_{2}^{(1)}, \ldots,\pi_{2}^{(p_{2})}$
and $\pi_{3}^{(1)}, \ldots, \pi_{3}^{(p_{3})}$ be the blocks of the partitions $\pi_{1}$, $\pi_{2}$ and $\pi_{3}$ respectively, where
the indexing of the blocks is subject only to the condition that $1\in \pi_{1}^{(p_{1})}$.  
The last passage white (resp. black, grey) vertex of block $\pi_1^{(i)}$ (resp. $\pi_2^{(j)}$, $\pi_3^{(k)}$) is the one belonging to the triangle with maximum white (resp. black, grey) traversal label.
As an immediate consequence of lemma \ref{wbgt}, the (initial) indices of the triangles containing the last passage vertex of blocks $\pi_1^{(i)}$, $\pi_2^{(j)}$ and $\pi_3^{(k)}$ are respectively:
$$\max(\pi_1^{(i)}), \;\;\;\;\; \alpha_2\alpha_3(\max(\alpha_{3}^{-1}\alpha_2^{-1}(\pi_2^{(j)}))), \;\;\;\;\; \alpha_3(\max(\alpha_{3}^{-1}(\pi_3^{(k)})))$$
In what follows, we denote by $m_{1}^{(i)}$ the maximal element of $\pi_{1}^{(i)}$ ($ 1\leq i \leq p_{1}$),
by $m_{2}^{'(j)}$ the maximal element of the set $\alpha_{3}^{-1}\alpha_2^{-1}(\pi_{2}^{(j)})$ for $1\leq j\leq p_{2}$ (equal to $\alpha_{3}^{-1}(\pi_{2}^{(j)})$ as $\pi_2$ is stable by $\alpha_2$)\footnote{In general $m_{2}^{'(j)} \neq \max(\pi_{2}^{(j)})$} and by $m_{3}^{(k)}$ the maximal element of $\alpha_{3}^{-1}(\pi_{3}^{(k)}) = \pi_{3}^{(k)}$ for $1\leq k\leq p_{3}$.
\begin{example}
The initial indices of the last passage vertices in example \ref{ex: pc1} are 3 and 5 (white blocks), 5 and 1 (black blocks), 3 and 1 (grey blocks). Figure \ref{BLAST} shows how the last passage black vertex of the block represented by the circle is identified. 
\begin{figure}[h]
\begin{center}
\includegraphics[width=50mm]{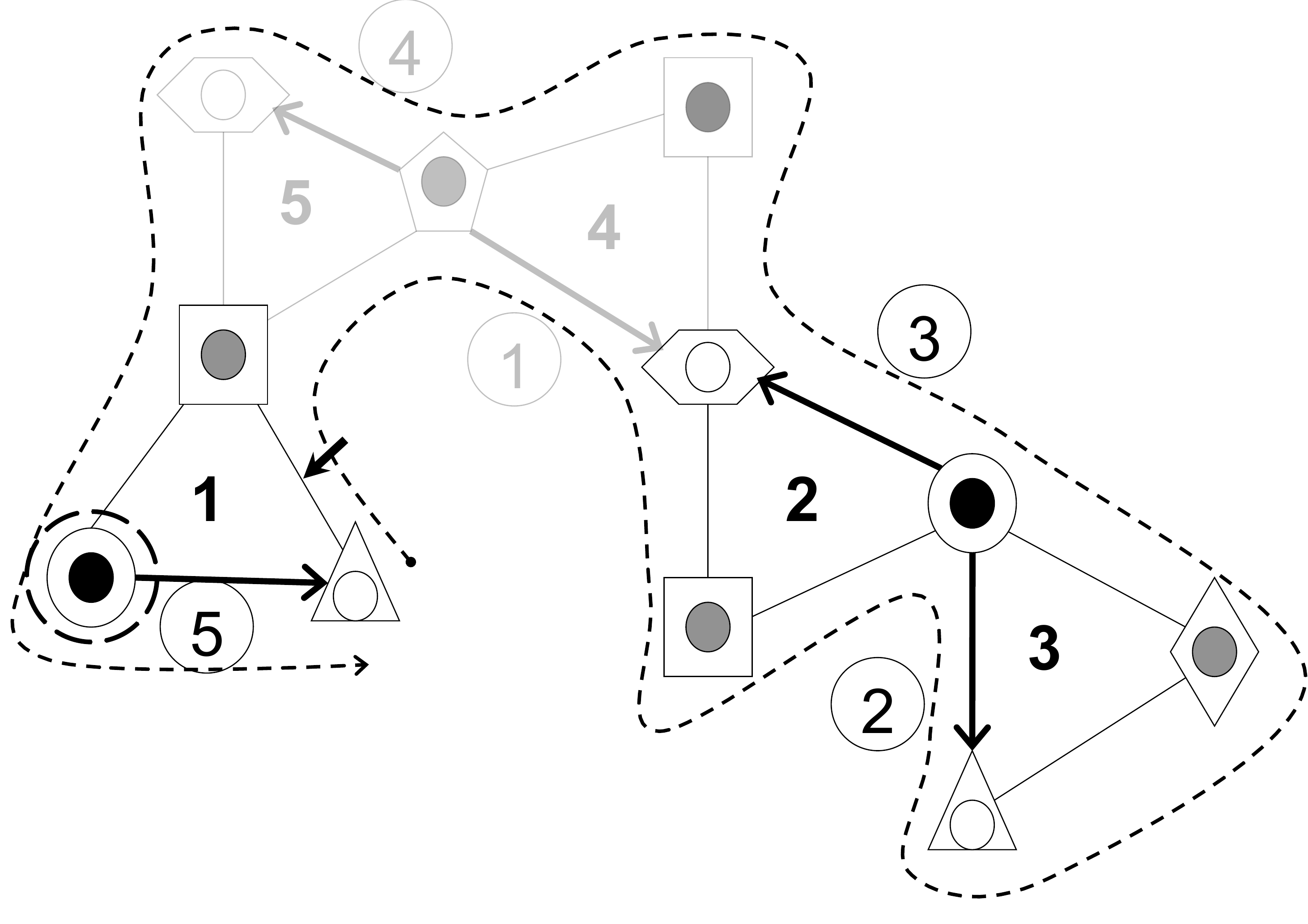}
\caption{Identification of the last passage vertex of a block of $\pi_2$}
\label{BLAST}
\end{center} 
\end{figure}
\end{example}

\subsection{Cactus Trees}

We focus on particular 3-colored cactus trees with two kinds of polygons: triangles and usual edges. More specifically we consider the sets $CT(p_{1}, p_{2}, p_{3}, a, b, c)$ of rooted cactus trees with $p_1$ white vertices, $p_2$ black vertices, $p_3$ grey vertices, $a$ triangles rooted in a grey vertex, $b$ triangles rooted in a white vertex, $c$ triangles rooted in a black vertex such that:

\begin{itemize}
\item {\tt } the root of the cactus tree is a white vertex
\item {\tt } a white vertex has black descending vertices and/or descending triangles rooted in this white vertex
\item {\tt } a black vertex has grey descending vertices and/or descending triangles rooted in this black vertex
\item {\tt } a grey vertex has white descending vertices and/or descending triangles rooted in this grey vertex
\item {\tt } triangles are composed of a white, a black and a grey vertex. Moving around triangles clockwise, these vertices are following each other in the white-black-grey-white cyclic order. As a direct consequence, triangles can
be seen as adding a "non-tree" edge linking a vertex and the rightmost descendant of one of its descendants. Note that edges cannot intersect triangles.
 \end{itemize}
\begin{example}
\label{ex : ex}
 Figure \ref{fig : ex ct} shows an example of a cactus tree of ${CT}({5, 4, 7, 1, 1, 2})$, 
 
\begin{figure}[h]
\begin{center}
\includegraphics[width=40mm]{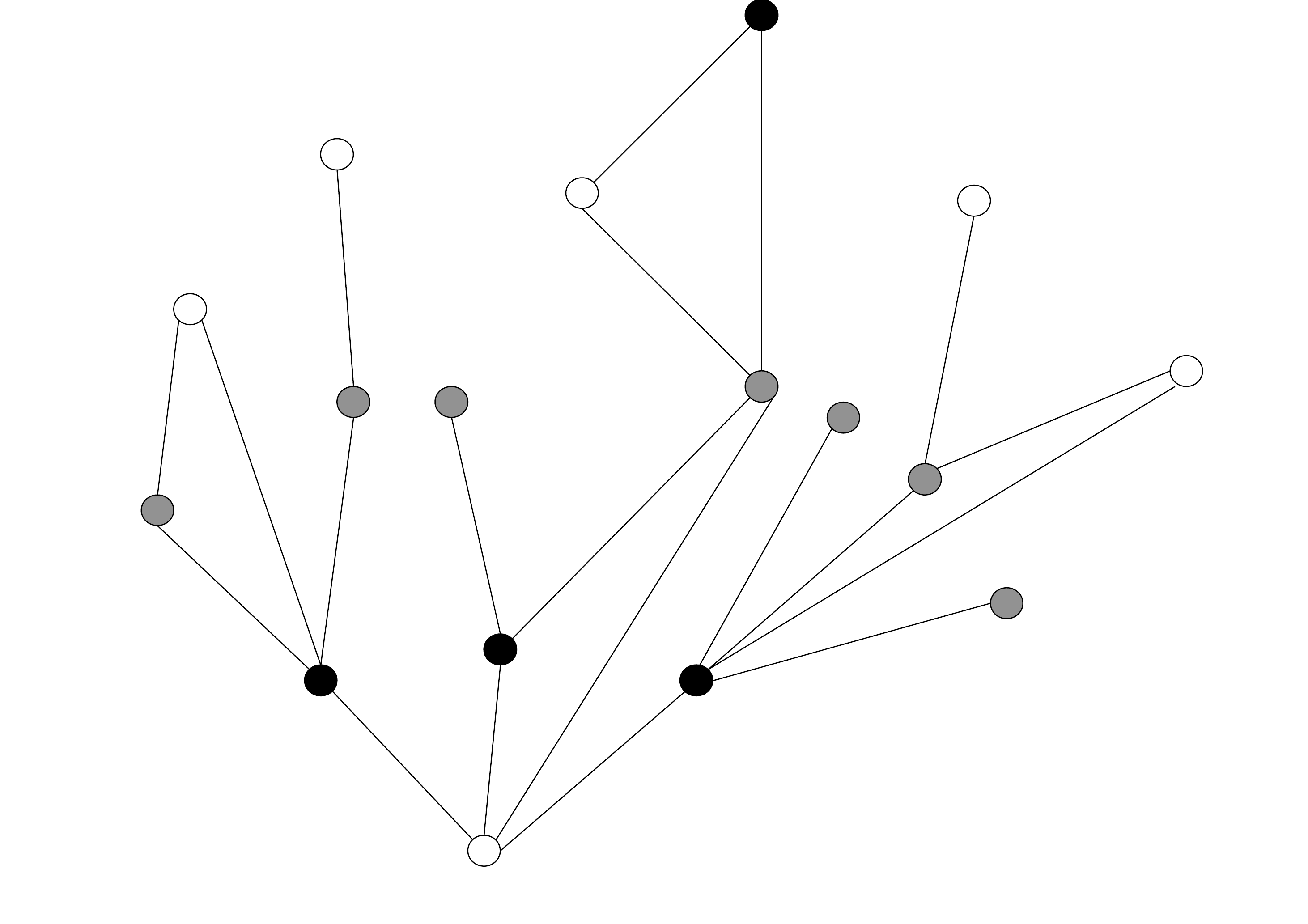}
\caption{Example of a Cactus Tree}
\label{fig : ex ct}
\end{center} 
\end{figure}

\end{example}

\begin{lemma} The cardinality of the considered set of cactus trees is :
\label{l : CT}
\footnotesize
\begin{eqnarray}
\nonumber |CT| = \frac{\left (a(b-p_3)+p_2p_3 \right)}{p_1p_2p_3}\binomial{p_1+p_2-1-a}{p_1-1, \,\,p_2-a-b}\binomial{p_2+p_3-1-b}{p_2-1, \,\,p_3-b-c}\binomial{p_1+p_3-2-c}{p_3-1, \,\,p_1-1-a-c}
\end{eqnarray}
\normalsize
\end{lemma}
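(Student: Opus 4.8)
The plan is to enumerate these rooted 3-colored cactus trees by decomposing them into their constituent "atomic" pieces — single edges and triangles — and counting the ways of assembling those pieces around each colored vertex, then invoking the multivariate Lagrange inversion theorem for implicit functions to close the recursion. First I would set up generating functions: let $W(x_1,x_2,x_3;u,v,w)$ be the generating function for cactus trees rooted at a white vertex, where $x_1,x_2,x_3$ mark white/black/grey vertices and $u,v,w$ mark triangles rooted at grey/white/black vertices respectively; similarly define $B$ and $G$ for trees rooted at black and grey vertices. The structural rules translate directly into a system: a white vertex carries a (possibly empty) sequence of black-rooted subtrees and a (possibly empty) sequence of descending triangles, each triangle itself sprouting a black-rooted subtree at its black corner and a grey-rooted subtree at its grey corner (with the "rightmost descendant" incidence condition encoded by how the triangle attaches). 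So schematically $W = x_1\,\phi_1(B, \text{triangles}(B,G))$ and cyclically $B = x_2\,\phi_2(G,\ldots)$, $G = x_3\,\phi_3(W,\ldots)$, where the $\phi_i$ are the usual $\frac{1}{1-\cdot}$ sequence operators.

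Next I would extract the coefficient $[x_1^{p_1}x_2^{p_2}x_3^{p_3}u^av^bw^c]\,W$ from this system. Because the three equations are cyclically coupled, the cleanest route is to compose them into a single functional equation of the form $W = x_1 H(W)$ for an appropriate $H$ built from the other two, or better, to use the multivariate Lagrange–Good inversion formula directly on the $3\times 3$ system, which expresses the desired coefficient as a constant term involving a Jacobian determinant. The Jacobian — roughly $\det(I - \partial(\text{RHS}))$ — is exactly the mechanism that will produce the prefactor $\big(a(b-p_3)+p_2p_3\big)/(p_1p_2p_3)$ in Lemma~\ref{l : CT}: the $1/(p_1p_2p_3)$ is the classical Lagrange $1/n$-type factor from inverting each of the three variables, and the linear-in-$a,b$ numerator is what survives from the Jacobian after the constant-term extraction. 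The three binomial–multinomial factors then come from reading off the number of ways to distribute, around each color class of vertices, the descending vertices of the next color and the descending triangles; each such distribution is a composition/multiset count that evaluates to a multinomial coefficient like $\binomial{p_1+p_2-1-a}{p_1-1,\ p_2-a-b}$.

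Concretely the key steps in order are: (1) write down the structural generating-function system encoding the four bulleted rules, being careful that a descending triangle at a white vertex consumes one black vertex and one grey vertex at its far corners and that edges cannot cross triangles (this forces a planar left-to-right ordering, hence genuine sequence rather than multiset operators); (2) verify the system is of the form amenable to Lagrange–Good inversion, i.e. $W,B,G$ each factor as (their marking variable) times a power series in the other two functions and the triangle-marking variables; (3) apply the multivariate Lagrange inversion theorem to get $|CT|$ as a constant-term/Jacobian expression; (4) expand the Jacobian determinant and simplify, tracking how the cross-terms assemble into $a(b-p_3)+p_2p_3$; (5) evaluate the remaining constant term as a product of three multinomial coefficients by elementary counting. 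I expect step (4) — extracting and simplifying the Jacobian contribution to obtain precisely the asymmetric numerator $a(b-p_3)+p_2p_3$ — to be the main obstacle, since it requires carefully bookkeeping which partial derivatives hit which functions (the asymmetry between the three colors coming from the root being white and from the "rightmost descendant" attachment rule breaking the cyclic symmetry) and then recognizing the resulting polynomial identity. A secondary subtlety is making sure the sequence-versus-multiset and the off-by-one shifts in the exponents (the $-1$'s and $-2$ in the binomials, e.g. $p_1+p_3-2-c$) are correctly justified by the rootedness and the incidence constraints rather than introduced ad hoc.
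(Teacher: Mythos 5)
Your plan coincides with the paper's own proof in the appendix: the same recursive decomposition yields the system $W = x_1/(1-B(1+GT_2))$, $B = x_2/(1-G(1+WT_3))$, $G = x_3/(1-W(1+BT_1))$, and the multivariate Lagrange (Good) inversion then produces the determinant factor $abc\left(p_2p_3-(p_3-b)a\right)$ together with the three multinomial coefficients, exactly as you predict. The only cosmetic difference is that the paper treats the triangle markers as three extra trivial coordinates of a $6\times 6$ Lagrange system rather than as passive marking variables in a $3\times 3$ one, which changes nothing in substance.
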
 

\begin{Proof}
The proof is postponed until appendix \ref{proof}
\end{Proof}

\section{A bijective description of partitioned cacti}
In this section we construct a bijective mapping $\Theta_{N, p_{1}, p_{2}, p_{3}}$.
Let 
$(\pi_{1}, \pi_{2}, \pi_{3}, \alpha_{1}, \alpha_{2})\in CC(p_{1}, p_{2}, p_{3}, N)$
be a partitioned $3-$cactus. Then we associate to it by $\Theta_{N, p_{1}, p_{2}, p_{3}}$ a 7-tuple 
$(\tau, S_{0}, S_{1}, S_{2}, \chi, \sigma_{1}, \sigma_{2})$
composed of 
\begin{itemize}
\item an ordered cactus tree $\tau \in CT(p_{1}, p_{2}, p_{3}, a, b, c)$ for a given triple $(a,b,c)$, 
\item unordered sets $S_{0} \in P_{p_1-1+p_2-a}([N])$,  $S_{1} \in P_{p_1-1+p_3-1-c}([N-1])$, $S_{2} \in P_{p_3+p_2-1-b}([N-1])$,
\item an ordered set $\chi \in OP_{p_3-b-c}([N])$ such that $S_0 \cap \chi = \emptyset$
\item and permutations $\sigma_{1} \in \Sigma_{N-p_1+1-p_3+c}$, $\sigma_{2}\in \Sigma_{N-p_2-p_3+b}$. 
\end{itemize}
The remaining of the section is devoted to detailed description of these objects and of how each of them is obtained 
from initial partitioned $3-$cactus.\\

\subsection{Cactus Tree}

{\bf Labeled $3$-colored tree}\\
First we build the last passage $3$-colored labeled tree $T$ with $p_{1}$ white, $p_{2}$ black and $p_{3}$ grey
vertices associated to the blocks of $\pi_1$, $\pi_2$ and $\pi_3$ such that white vertex $p_1$ associated with block $\pi_1^{(p_1)}$ is the root and ascendant/descendant relations are defined by the last passage vertices as follows. Black vertex $j$ associated to block $\pi_2^{(j)}$ is the descendants of white vertex $i$ associated to block $\pi_1^{(i)}$ if the last passage vertex of block $\pi_2^{(j)}$ belongs to the same triangle in the partitioned cactus as a white vertex of block $\pi_1^{(i)}$. If two black vertices $j_1$ and $j_2$ are descendant of the same white vertex $i$, then $j_1$ is on the left of $j_2$ if the white traversal label of the triangle containing the last passage vertex of $\pi_2^{(j_1)}$ is less than the one of $\pi_2^{(j_2)}$. Similar rules apply to define the grey descendants of a black vertex and the white descendants of a grey vertex. Figure \ref{2grey} illustrates the identification of the two grey descendants of the black vertex associated with a circle in example \ref{ex: pc1}, as well as their order.

\begin{figure}[h]
\begin{center}
\includegraphics[width=50mm]{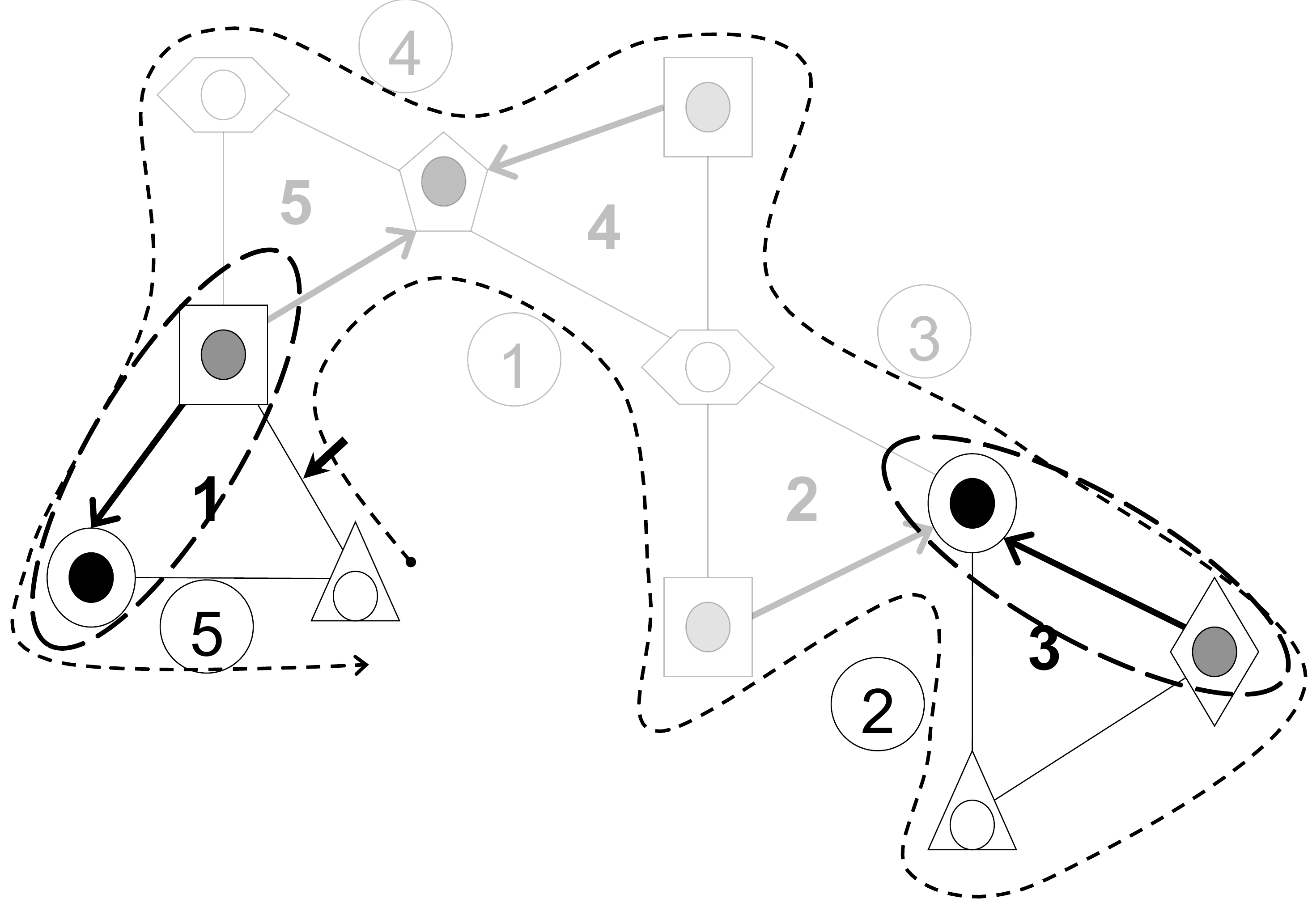}
\caption{Identification of the two grey descendant of the black vertex associated to circle shape}
\label{2grey}
\end{center} 
\end{figure}

More formally: 
\begin{itemize}
\item $(j_1,j_2,\ldots,j_s)$ is the ordered set of black descendants of white vertex $i$ if $\alpha_2\alpha_3(m_{2}^{'(j_1)}) < \alpha_2\alpha_3(m_{2}^{'(j_2)})<\ldots< \alpha_2\alpha_3(m_{2}^{'(j_s)})$ and $\alpha_2\alpha_3(m_{2}^{'(j_u)}) \in \pi_1^{(i)}$ for $1\leq u \leq s$.
\item $(k_1,k_2,\ldots,k_s)$ is the ordered set of grey descendants of black vertex $j$ if $\alpha_3^{-1}\alpha_2^{-1}\alpha_3(m_{3}^{(k_1)}) < \alpha_3^{-1}\alpha_2^{-1}\alpha_3(m_{3}^{(k_2)})<\ldots< \alpha_3^{-1}\alpha_2^{-1}\alpha_3(m_{3}^{(k_s)})$ and\\ $\alpha_3^{-1}\alpha_2^{-1}\alpha_3(m_{3}^{(k_u)}) \in \alpha_3^{-1}\alpha_2^{-1}(\pi_2^{(j)})$ for $1\leq u \leq s$.
\item $(i_1,i_2,\ldots,i_s)$ ($i_u \neq p_1$) is the ordered set of white descendants of grey vertex $k$ if $\alpha_3^{-1}(m_{1}^{(i_1)}) < \alpha_3^{-1}(m_{1}^{(i_2)})<\ldots< \alpha_3^{-1}(m_{1}^{(i_s)})$ and $\alpha_3^{-1}(m_{1}^{(i_u)}) \in \alpha_3^{-1}(\pi_3^{(k)})$ for $1\leq u \leq s$.
\end{itemize}
\begin{lemma} The labeled $3$-colored tree $T$ is correctly defined.\end{lemma}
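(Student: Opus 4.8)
To show that the labeled $3$-colored tree $T$ is correctly defined, I need to verify that the ascendant/descendant relations described above actually yield a tree: every vertex other than the root has exactly one ascendant, and the resulting graph is connected (hence acyclic and a genuine rooted tree on the $p_1+p_2+p_3$ colored vertices). The plan is to argue in two stages: first, that each non-root vertex has a well-defined and unique parent, and second, that following parents always terminates at the root white vertex $p_1$.

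For the first stage, I would fix a black block $\pi_2^{(j)}$ and consider its last passage vertex, which by the discussion preceding Lemma~\ref{wbgt} lies in the triangle with initial index $\alpha_2\alpha_3(m_2^{'(j)})$. Since $\pi_1$ is a partition of $[N]$, this triangle index belongs to exactly one block $\pi_1^{(i)}$, so black vertex $j$ gets exactly one white parent $i$; moreover the strict inequalities $\alpha_2\alpha_3(m_2^{'(j_1)})<\cdots<\alpha_2\alpha_3(m_2^{'(j_s)})$ among siblings are genuine total orderings because the values $m_2^{'(j_u)}$ are maximal elements of the \emph{disjoint} sets $\alpha_3^{-1}(\pi_2^{(j_u)})$ and hence pairwise distinct, so the left-to-right order of the children is unambiguous. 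The identical argument, using that $\pi_2$ partitions $[N]$ (via $\alpha_3^{-1}\alpha_2^{-1}$, which is a bijection) and that $\pi_3$ partitions $[N]$, shows each grey vertex has a unique black parent with a well-defined sibling order, and each non-root white vertex has a unique grey parent with a well-defined sibling order. The one subtlety is the root: white vertex $p_1$ must have no grey parent, which is exactly where the normalization $1\in\pi_1^{(p_1)}$ enters — I would check that $\alpha_3^{-1}(m_1^{(p_1)})$, or rather the triangle that would assign a grey parent to white block $\pi_1^{(p_1)}$, is excluded by the condition $i_u\neq p_1$ in the definition of grey descendants, consistently with $1$ being the root of the original cycle $\gamma_N$.

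For the second stage (connectedness / acyclicity), the natural invariant to track is a monotonicity statement: moving from a vertex to its parent strictly decreases some traversal-based quantity, so no cycles can form and every path back terminates. Concretely, when black vertex $j$ is a child of white vertex $i$, the white traversal label of the triangle holding the last passage vertex of $\pi_2^{(j)}$ should be strictly smaller than the maximum white traversal label over block $\pi_1^{(i)}$ — i.e. strictly smaller than $m_1^{(i)}$ in white-traversal coordinates — because the last passage black vertex sits on an edge that is traversed strictly before the last white-to-grey edge of its white block in the walk around the white face. Iterating this across the three colors, each parent step strictly decreases the relevant traversal index, and since there are finitely many triangles the process must reach the unique triangle containing the root edge, whose white vertex lies in $\pi_1^{(p_1)}$.

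The main obstacle I expect is precisely this monotonicity/termination claim: making rigorous that "the last passage vertex is reached strictly before the block closes" requires carefully relating the three traversal labelings of Lemma~\ref{wbgt} via the formulas $\alpha_3^{-1}\alpha_2^{-1}(i)$ and $\alpha_3^{-1}(i)$, and tracking how the cyclic structure of $\gamma_N=\alpha_1\alpha_2\alpha_3$ interleaves the white, black and grey edges around the single white face. The bookkeeping of "which triangle, in which traversal coordinate, strictly precedes which" is where all the real content sits; once that strict-decrease lemma is in hand, well-definedness of parents (stage one) plus termination at the root (stage two) immediately give that $T$ is a rooted tree. \qed
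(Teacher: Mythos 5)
Your overall architecture matches the paper's: uniqueness of the parent of each non-root vertex is immediate because the blocks of each partition are disjoint, and the real content is an acyclicity/termination argument based on a monotone invariant along parent chains. But you explicitly leave that invariant unproven (you call it ``the main obstacle'' and say it is ``where all the real content sits''), and the specific form you propose for it is false. You ask for a \emph{strict} inequality at every single child-to-parent step, e.g.\ that the white traversal label $\alpha_2\alpha_3(m_2^{'(j)})$ of the triangle containing the last passage vertex of $\pi_2^{(j)}$ be strictly smaller than $m_1^{(i)}$. Equality $\alpha_2\alpha_3(m_2^{'(j)})=m_1^{(i)}$ does occur: it is exactly the configuration in which the two last passage vertices lie in the same triangle of the cactus, which is what later produces a triangle of the cactus tree $\tau$. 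The same equality case exists for the other two colors. With only non-strict inequalities at each step, acyclicity does not follow, so your invariant as stated cannot close the argument.

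The missing idea is that strictness must come from $\gamma_N=\alpha_1\alpha_2\alpha_3$ itself, comparing white vertices a full color-cycle apart. If white $i_1$ descends from grey $k$, which descends from black $j$, which descends from white $i_2$, the membership conditions together with stability of the $\pi$'s under the $\alpha$'s give only $m_1^{(i_1)}\leq m_3^{(k)}\leq m_2^{'(j)}$ and $\gamma(m_2^{'(j)})=\alpha_1\alpha_2\alpha_3(m_2^{'(j)})\in\pi_1^{(i_2)}$. If $m_2^{'(j)}\neq N$ then $\gamma(m_2^{'(j)})=m_2^{'(j)}+1\leq m_1^{(i_2)}$, hence the strict increase $m_1^{(i_1)}<m_1^{(i_2)}$; if $m_2^{'(j)}=N$ then $\gamma(m_2^{'(j)})=1$, which forces $i_2$ to be the root block $\pi_1^{(p_1)}$. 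This dichotomy simultaneously rules out cycles and shows every parent chain terminates at the root, which also disposes of your worry about the root's parent more satisfactorily than the exclusion $i_u\neq p_1$ alone. Without this computation your submission is a correct plan, not a proof.
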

\begin{proof}Assume non root white vertex $i_1$ is a descendant of grey vertex $k$, that is in its turn a descendant of black vertex $j$. Suppose now that $j$ is a descendant of white vertex $i_2$. Following our construction rules, we have
$m_{1}^{(i_1)} \in \pi_{3}^{(k)}$, so that $m_{1}^{(i_1)} \leq m_{3}^{(k)}$. Then  $m_{3}^{(k)} \leq m_{2}^{'(j)}$
as $m_{3}^{(k)} \in \alpha_3^{-1}(\pi_{2}^{(j)}) = \alpha_3^{-1}\circ \alpha_2^{-1}(\pi_{2}^{(j)})$($\pi_2$ is stable by $\alpha_2$). Finally, as $\alpha_2 \circ \alpha_3 (m_{2}^{'(j)}) \in \pi_1^{(i_2)}$,  $\gamma(m_{2}^{'(j)}) = \alpha_1 \circ \alpha_2 \circ \alpha_3 (m_{2}^{'(j)})\in \alpha_1(\pi_1^{(i_2)}) = \pi_1^{(i_2)}$ and $\gamma(m_{2}^{'(j)}) \leq m_{1}^{(i_2)}$. Two cases can occur :
\begin{itemize}
\item $m_{2}^{'(j)} \neq N$ then $\gamma(m_{2}^{'(j)}) = m_{2}^{'(j)}+1 \leq m_{1}^{(i_2)}$. As a direct consequence, $m_{1}^{(i_1)} <
m_{1}^{(i_2)}$
\item $m_{2}^{'(j)} = N$ and $\gamma(m_{2}^{'(j)}) = 1 \in \pi_1^{(i_2)}$. As a direct consequence, $i_2$ is the root vertex.
\end{itemize}
Using a similar argument, one can show that if black vertex $j_1$ (resp. grey vertex $k_1$) is on a descending branch of black vertex $j_2$ (resp. grey vertex $k_2$), then $m_{2}^{'(j_1)} < m_{2}^{'(j_2)}$ (resp. $m_{3}^{(k_1)} < m_{3}^{(k_2)}$).
Consequently, the white, black and grey vertices' labels on each branch of the graph are strictly increasing and the last vertex of the branch is always the root vertex (the block containing $1$). 
 \end{proof}
{\bf Cactus tree}\\
Remove  the labels from $T$ to obtain the {\it $3$-colored ordered tree} $t$. 
Next step is to construct the $3$-colored cactus tree $\tau$. To that purpose we draw a "non-tree" edge between the rightmost descendant and the ascendant of a {\it non root} vertex $v$ whenever the last passage vertices associated to $v$ and its rightmost descendant belong to the same triangle within the partitioned cactus. Clearly, this last condition can only be verified by the rightmost descendant. Indeed, last passage vertices associated with other descendants belong to triangles with strictly lower indices with respect to the  traversal of the color of $v$. This case happens for instance for the square and circle blocks on figure~\ref{triangle}. More formally, we have: 

\begin{figure}[h]
\begin{center}
\includegraphics[width=50mm]{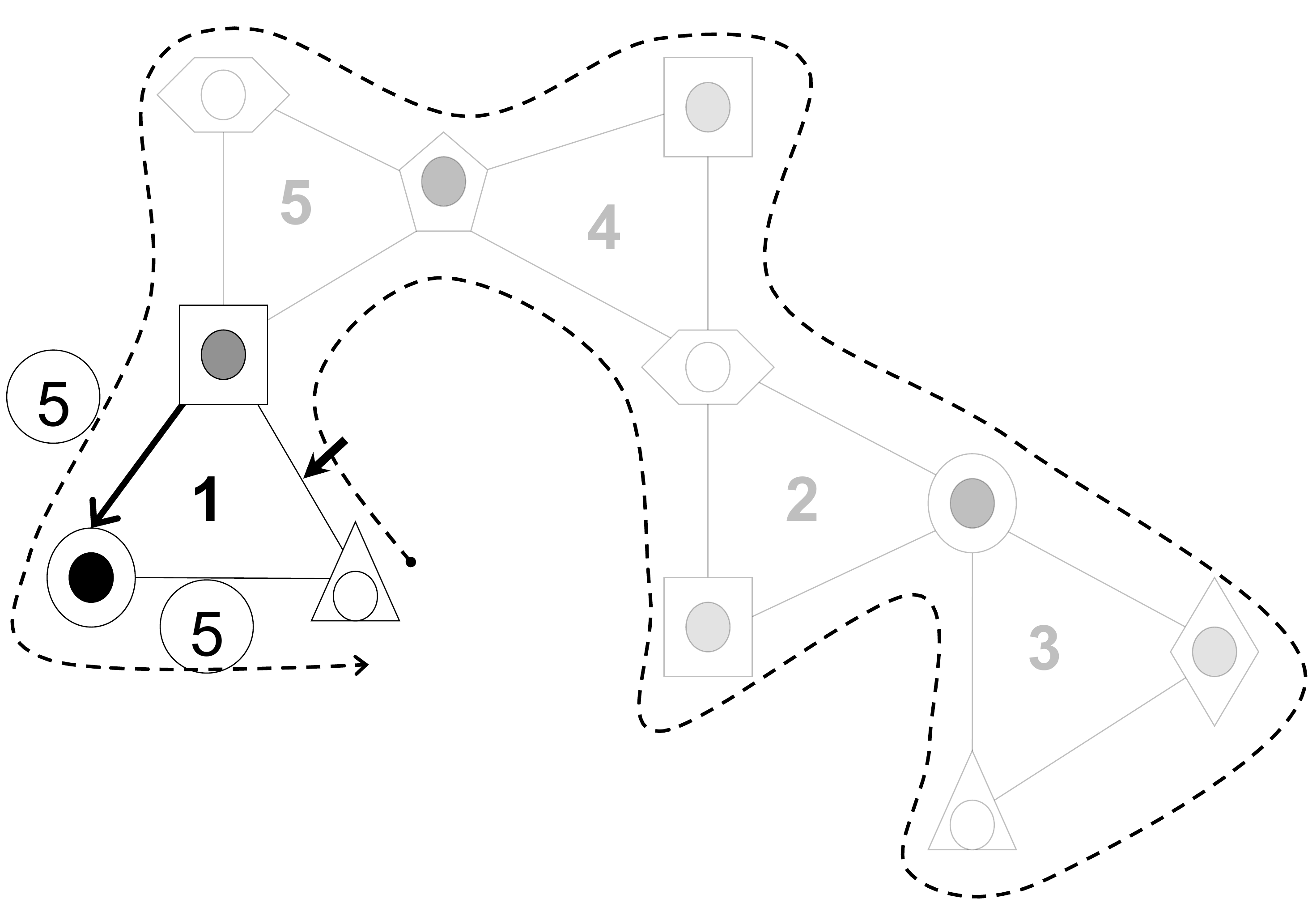}
\caption{Two last passage vertices belonging to the same triangle}
\label{triangle}
\end{center} 
\end{figure}

\begin{itemize}
\item a triangle rooted in a grey vertex when: $\alpha_2\circ \alpha_3(m_{2}^{'(j)}) =m_{1}^{(i)}$
\item a triangle rooted in a white vertex when: $\alpha_3(m_{3}^{(k)}) = \alpha_2\alpha_3(m_{2}^{'(j)})$
\item a triangle rooted in a black vertex when: $m_{1}^{(i)} = \alpha_3(m_{3}^{(k)})$
\end{itemize}

\begin{example}
\label{ex: t&ct}
Let us now apply the full procedure to example~\ref{ex: pc1}. Since
$m_{2}^{'(1)}=\max{\{2, 3, 5\}}=5 \in \alpha_3^{-1}\circ \alpha_2^{-1}(\pi_{1}^{(2)})=\{2, 5\}$, the black circle $1$
is a descendant of the root triangle $2$. As $m_{3}^{(1)}=3 \in \alpha_3^{-1}(\pi_{2}^{(1)})= \{2, 3, 5\}$ and
$m_{3}^{(2)}=5 \in \alpha_3^{-1}(\pi_{2}^{(1)})= \{2, 3, 5\}$, the grey rhombus $1$ and the grey square $2$ are both
descendants of the black circle $1$. Moreover, since 
$\alpha_3^{-1}\circ \alpha_2^{-1}\circ \alpha_3(m_{3}^{(1)})=2 <
\alpha_3^{-1}\circ \alpha_2^{-1}\circ \alpha_3(m_{3}^{(2)})=5$
the grey rhombus $1$ is to the left of the grey square $2$. The white hexagon $1$ is a descendant of the grey square $2$ since
$m_{1}^{(1)}=5 \in \pi_{3}^{(2)}=\{1, 2, 4, 5\}$ and the black pentagon $2$ is in its turn a descendant of the white hexagon $1$
as $m_{2}^{'(2)}=\max{\{1, 4\}}=4 \in \alpha_3^{-1}\circ \alpha_2^{-1}(\pi_{1}^{(1)})=\{1, 3, 4\}$. 
This is how we construct the tree $T$. Then by removing the labels and shapes from $T$ we get the tree $t$.  
As $\alpha_3^{-1}\circ \alpha_2^{-1}\circ \alpha_3(m_{3}^{(2)}) = m_{2}^{'(1)}=5$ 
we create a triangle rooted in the root vertex of $t$ connecting the root vertex of $t$ with its right grey vertex and, as
$\alpha_2\circ \alpha_3(m_{2}^{'(2)}) =m_{1}^{(i)}= 5$, we construct a triangle rooted in this grey vertex
connecting it with the top black vertex of $t$.
Finally, we obtain the cactus tree $\tau$ from the tree $t$ 
(see Figure~\ref{fig : t&ct}).
\end{example} 
From now to the end of this section, we will denote by 
$a$ the number of triangles rooted in grey vertices, by $b$ the number of triangles rooted in white vertices and 
by $c$ the number of triangles rooted in black vertices.

\begin{figure}[h]
\begin{center}
\includegraphics[width=50mm]{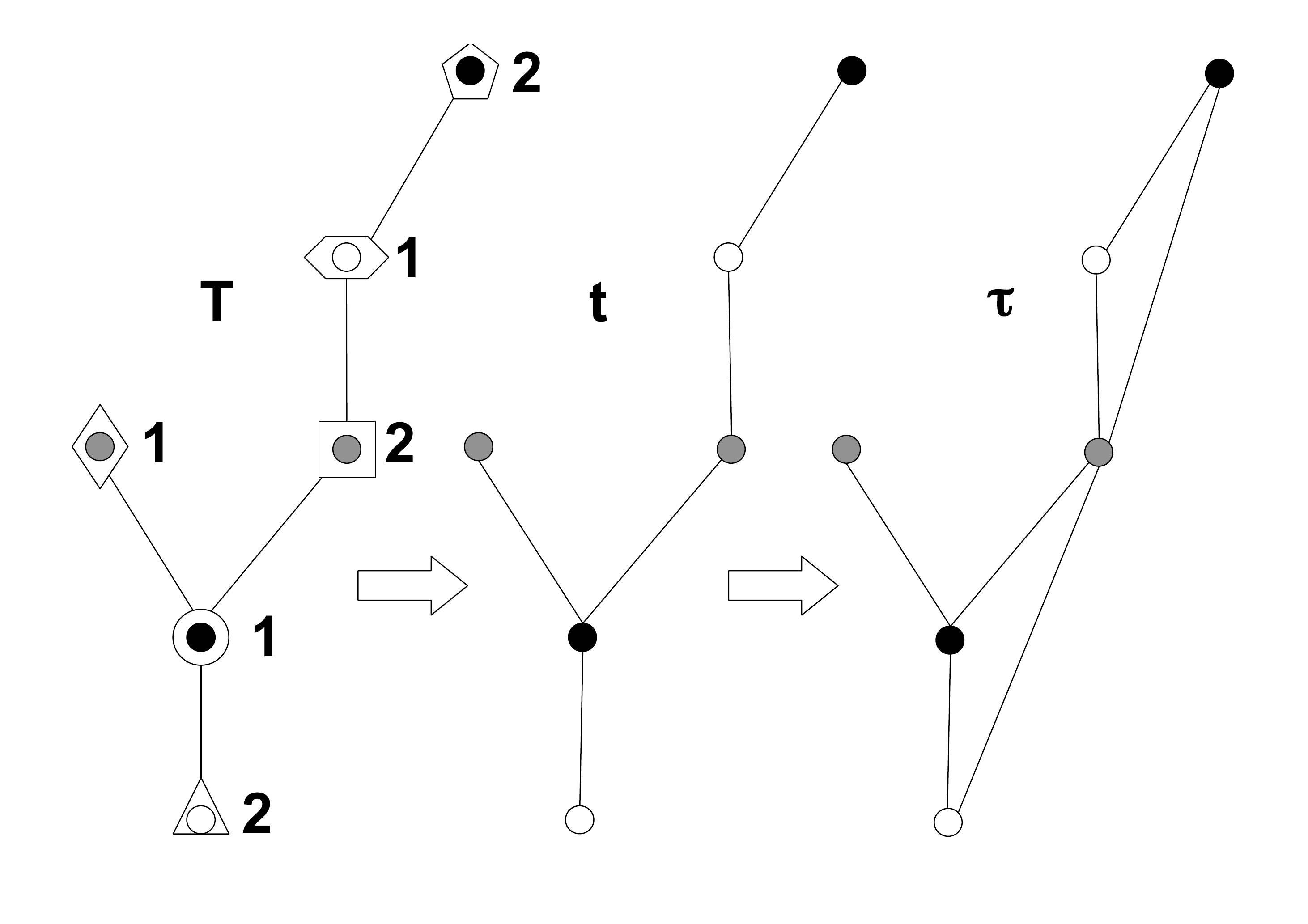}
\caption{Labeled Tree $T$, tree $t$ and Cactus Tree $\tau$}
\label{fig : t&ct}
\end{center} 
\end{figure}

\subsection{Permutations, ordered and unordered subsets}
{\bf Relabeled support sets}\\
\noindent {\it (i)} {\it Relabeling permutations.}\\
Using a similar approach as in \cite{SV}, we construct three relabeling permutations $\lambda_1,\lambda_2,\lambda_3$. 
First we consider the {\it reverse-labelled} tree $T'$ resulting from the labelling of $t$,
based on three independant reverse-labelling procedures for white, black and grey vertices.
The root is labelled $p_1$, the white vertices at level $4$ are labeled from right to left, beginning with $p_1-1$,  
proceeding by labelling  from right to left 
white vertices at level $7$ and all the other white levels until reaching the leftmost white vertex at the top white level labelled by $1$.
We proceed in the same way to label black and grey vertices. 
Next step consists in relabeling the blocks (white, black and grey) 
by using the new indices from $T'$. If a white  vertex is labeled $i$ in $T$ and $i'$ in $T'$, we note $\pi_1^{i'} = \pi_1^{(i)}$.
Black and grey blocks are relabeled in the similar way.
Let $\omega^{i}$, $\upsilon^{j}$, $\nu^{k}$ be the strings given by writing the elements of 
$\pi_{1}^{i}$, $\alpha_3^{-1}(\pi_{2}^{j})$, $\pi_{3}^{k}$ in increasing order.
Denote $\omega = \omega^{1}\dots \omega^{p_1}$, $\upsilon = \upsilon^{1}\dots \upsilon^{p_2}$,
$\nu = \nu^{1}\dots \nu^{p_3}$, concatenations of the strings defined above. 
We define $\lambda_1 \in \Sigma_{N}$ by setting $\omega$ the first line and $[N]$ 
the second line of the two-line representation of this permutation. Similarly,  we define $\lambda_2$ and $\lambda_3$.\\ 
%
%
\noindent {\it (ii)}{\it Support sets}\\
Now we  define the unordered set $S_{0} \subset [N]$ of size $\mid S_{0} \mid = p_1-1+p_2-a$ as 
$$S_{0}=\lambda_1 (\{m_{1}^{i}, \,\alpha_{2}\circ \alpha_{3}(m_{2}^{'j})\mid 1\leq i\leq p_{1}-1,\,1\leq j \leq p_{2}\}),$$
the set $S_{1} \subset [N]$ of size $\mid S_{1} \mid  = p_{1}-1+p_{3}-c$ as
$$S_{1}=\lambda_{3}( \{m_{3}^{k}, \, \alpha_{3}^{-1}(m_{1}^{i})\mid 1\leq k\leq p_{3},\, 1\leq i \leq p_{1}-1\})$$
the set $S_{2} \subset [N]$ of size $\mid S_{2} \mid  = p_{2}+p_{3}-b$ as
$$S_{2}=\lambda_{2}( \{m_{2}^{'j}, \, \alpha_{3}^{-1}\circ \alpha_{2}^{-1}\circ \alpha_{3}(m_{3}^{k})\mid 1\leq j\leq p_{2},\, 1\leq k \leq p_{3}\})$$
(note that $N$ is {\bf always} in $S_1$ and $S_2$ as $N=\lambda_3(m_{3}^{p_3})=\lambda_2(m_{2}^{p_2})$)\\
%
\begin{example}
\label{ex: relabeling}
Let us go back to our Example~\ref{ex: t&ct} and construct relabeling permutations 
and support sets. Using a reverse labeling of trees $T$ we can set:
\begin{eqnarray}
\nonumber
\pi_{1}^{1} =\pi_{1}^{(1)}, \; \pi_{1}^{2}=\pi_{1}^{(2)}, \; \pi_{2}^{1} =\pi_{2}^{(2)}, \; \pi_{2}^{2}=\pi_{2}^{(1)}, \; \pi_{3}^{1} =\pi_{3}^{(1)}, \; \pi_{3}^{2}=\pi_{3}^{(2)}     
\end{eqnarray}
Then the strings $\omega^{i}$, $\upsilon^{j}$ and $\nu^{k}$ are defined by:
\begin{eqnarray}
\nonumber
\omega^{1} = 245,\; \omega^{2}= 13,\; \upsilon^{1} = 14,\;  \upsilon^{2}= 235,\; \nu^{1} = 3,\;  \nu^{2}= 1245    
\end{eqnarray}
Let us construct the relabeling permutations $\lambda_{1}$, $\lambda_{2}$ and $\lambda_{3}$:

\footnotesize
\[ \lambda_{1}  = \left (\begin{array}{ccc}
\begin{array}{ccclccc}2&4&5\\1&2&3\end{array}
\left | \begin{array}{cclcc} 1&3\\4&5\end{array} \right.
\end{array} \right ) \lambda_{2} = \left (\begin{array}{cc}
\begin{array}{cclcc} 1&4\\1&2\end{array}
\left | \begin{array}{ccclccc}2&3&5\\3&4&5\end{array} \right.
\end{array} \right )   \lambda_{3} = \left (\begin{array}{cc}
\begin{array}{clc} 3\\1\end{array}
\left | \begin{array}{cccclcccc}1&2&4&5\\2&3&4&5\end{array} \right.
\end{array} \right )   \]
\normalsize
Then relabeled support sets are defined by:
\begin{eqnarray}
\nonumber S_{0} = \lambda_{1}(\{1, 5\}) = \{3, 4\}, \; S_{1} = \lambda_{3}(\{1, 3, 5\}) = \{1, 2, 5\}, \; S_{2} = \lambda_{2}(\{2, 4, 5\}) = \{2, 3, 5\}    
\end{eqnarray}
Relabeling permutations and support sets are pictured on figure \ref{figu : label}.
\end{example}

\begin{figure}
\begin{center}\includegraphics[width=40mm]{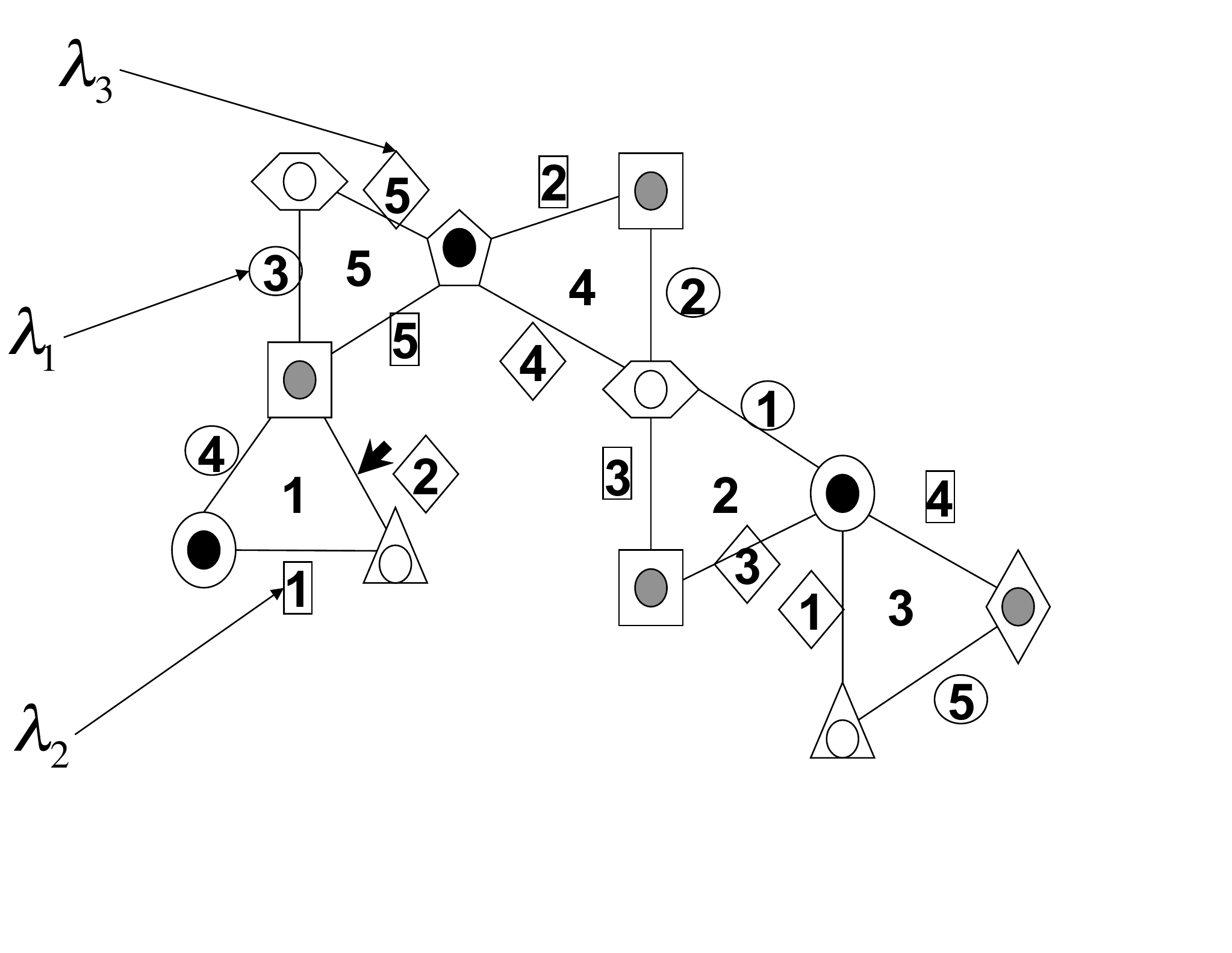}\
\includegraphics[width=38mm]{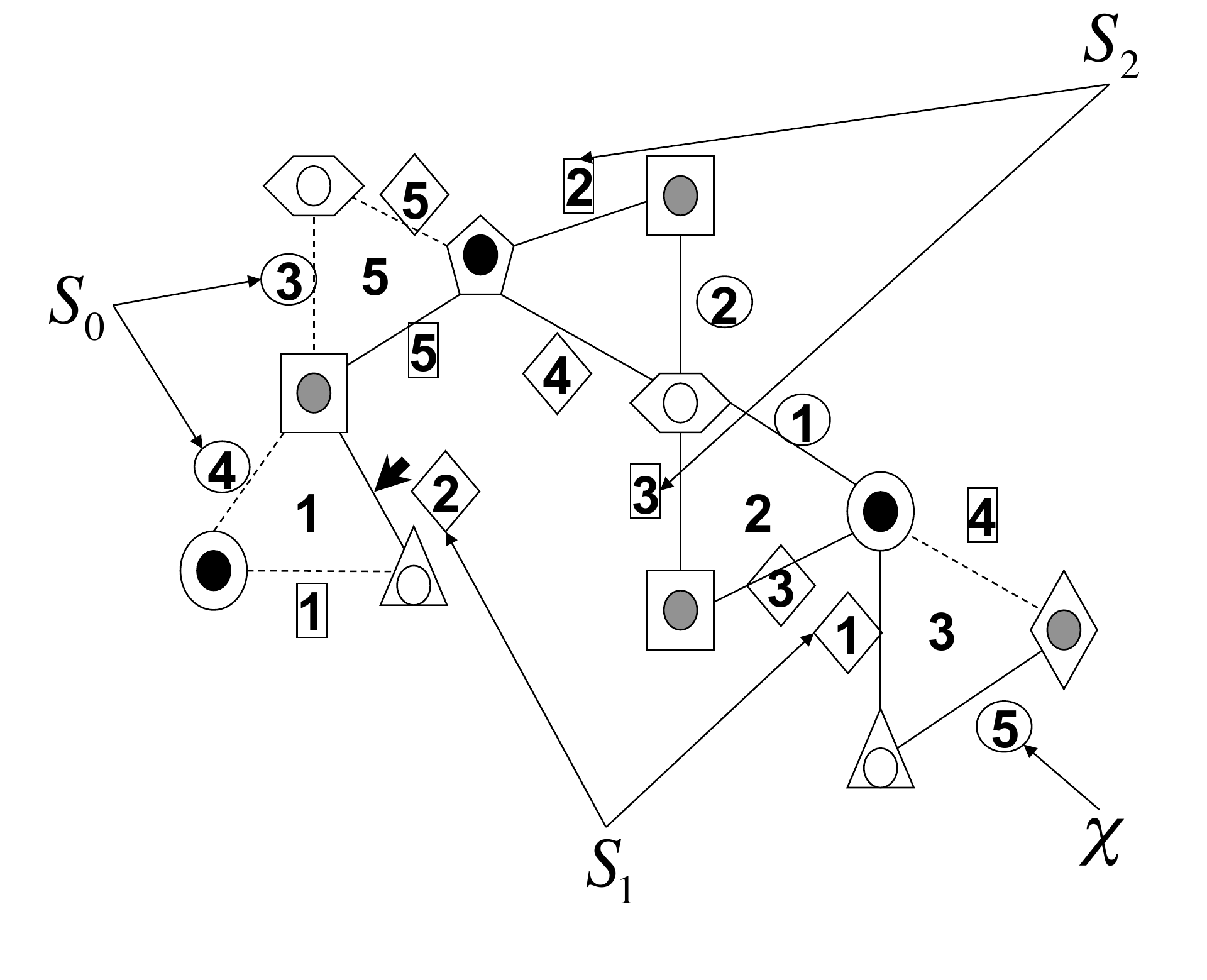}\
\includegraphics[width=40mm]{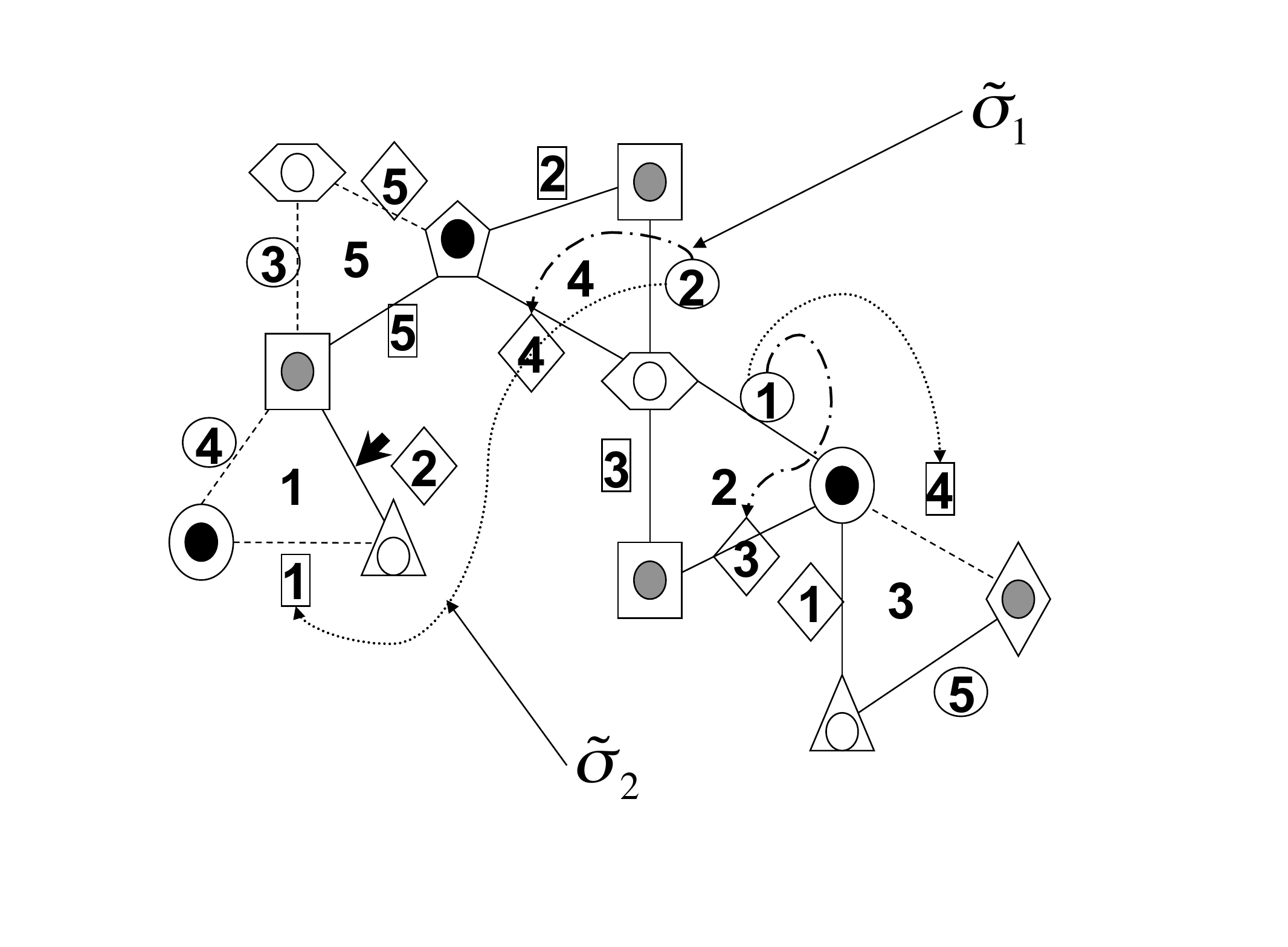}
\caption{Relabeling permutations, support sets and permutations}
\label{figu : label}
\end{center} 
\end{figure}
{\bf Relabeled ordered support complement set.}\\
We define an ordered set $$\chi=\left(\lambda_{1}\circ \alpha_{3}(m_{3}^{k}), k \leq p_{3} \mid \alpha_{3}(m_{3}^{k})\notin \{m_{1}^{i}, i\leq p_{1}-1,
\alpha_{2}\circ \alpha_{3}(m_{2}^{'j}), j \leq p_{2}\}\right)$$
of size $ \mid \chi \mid =p_{3}-b-c$. The relation $S_0 \cap \chi = \emptyset$ is clearly verified.\\
\begin{example}
\label{ex: chi}
The support complement set $\chi$ for our previous example~\ref{ex: relabeling} is $\chi=\left(\lambda_{1}(\{1, 3\}\setminus\{1\})\right) = \lambda_{1}(3) = 5$.
It is represented on figure \ref{figu : label}.
\end{example}

\noindent{\bf Permutations}\\
We need two additional objects to end our construction. We proceed in two steps.\\
{\it (i) Partial Permutations}\\
First we define the sets $E_1 = [N] \setminus \lambda_1\{m_{1}^{i}, \alpha_{3}(m_{3}^{k}) \mid 1\leq i\leq p_{1}-1,\,1\leq k \leq p_{3}\}$ and $E_2 = [N] \setminus \lambda_1\{\alpha_{2}\circ \alpha_{3}(m_{2}^{'j}), \alpha_{3}(m_{3}^{k}) \mid 1\leq j\leq p_{2},\,1\leq k \leq p_{3}\}$.
We construct partial permutations $\widetilde{\sigma_1}$ and $\widetilde{\sigma_2}$ defined respectively on $E_1$ and $E_2$ in the following way :
\begin{eqnarray}
\nonumber \widetilde{\sigma_1} :\phantom{lalalalaal}  E_1 \phantom{an}
&\longrightarrow& [N] \setminus S_1 \\
\nonumber u\phantom{an} \;& \longmapsto &\;\lambda_3\circ\alpha_3^{-1}\circ\lambda_1^{-1}(u)\\
\nonumber \phantom{\widetilde{\sigma_2} :}\phantom{lalalalaal}  \phantom{E} \phantom{an}\\
\nonumber \widetilde{\sigma_2} :\phantom{lalalalaal}  E_2 \phantom{an}
&\longrightarrow& [N] \setminus S_2 \\
\nonumber u\phantom{an} \;& \longmapsto &\;\lambda_2\circ\alpha_3^{-1}\circ\alpha_2^{-1}\circ\lambda_1^{-1}(u)
\end{eqnarray}







\noindent {\it (ii) Permutations}\\
For the second step, we define the ordered set $\overline{E_1}$ with all the elements of $E_1$ sorted in increasing order and $\rho_1$ the labeling function which associates to each element of $E_1$ its position index in $\overline{E_1}$. As a direct consequence, we have $\rho_1(E_1) = [N-p_1+1-p_3+c]$. Similarly we define $\rho_2$ to label the elements of $E_2$, $\rho_3$ to label the elements of
$[N] \setminus S_1$ and $\rho_4$ to label the elements of 
$[N] \setminus S_2$. We can now state the definition of permutations $\sigma_1$ and $\sigma_2$ as :
\begin{eqnarray}
\nonumber \sigma_1 :\phantom{[p_2 - b]}  [N-p_1+1-p_3+c] \phantom{an}
&\longrightarrow& [N-p_1+1-p_3+c] \\
\nonumber u\phantom{an} \;& \longmapsto &\; \rho_3\circ\widetilde{\sigma_1}\circ\rho_1^{-1}(u)\\
\nonumber \phantom{\widetilde{\sigma_2} :}\phantom{lalalalaal}  \phantom{E} \phantom{an}\\
\nonumber {\sigma_2} :\phantom{[p_1+1-c]}   [N-p_2 - p_3+b] \phantom{an}
&\longrightarrow& [N-p_2 - p_3+b] \\
\nonumber u\phantom{an} \;& \longmapsto &\;\rho_4\circ\widetilde{\sigma_2}\circ\rho_2^{-1}(u)
\end{eqnarray}
\begin{example}
\label{ex: permutations}
Let us continue the  previous Example~\ref{ex: chi}. The sets $E_{1}$ and $E_{2}$ are equal to
$ 
E_{1}= [4]\setminus \{1, 3\} = \{2, 4\}, E_{2}= [4]\setminus \{1, 3\} = \{2, 4\}
$.
Then the partial permutations $\widetilde{\sigma_{1}}$ and $\widetilde{\sigma_{2}}$  and the permutations $\sigma_{1}$ and $\sigma_{2}$ (see figure \ref{figu : label}) are:

\[  \widetilde{\sigma_{1}} = \left (\begin{array}{c}
\begin{array}{ccclccc}1&2\\3&4\end{array}
\end{array} \right )  \phantom{lyu} \widetilde{\sigma_{2}} = \left (\begin{array}{c}
\begin{array}{cclcc} 1&2\\5&4\end{array}
\end{array} \right )  \phantom{lyu}  \sigma_{1} = \left (\begin{array}{c}
\begin{array}{ccclccc}1&2\\1&2\end{array}
\end{array} \right )  \phantom{lyu} \sigma_{2} = \left (\begin{array}{c}
\begin{array}{cclcc} 1&2\\2&1\end{array}
\end{array} \right )   \]
\end{example}

\section{Derivation of the main formula}
Let  $I(p_1, p_2, p_3, N)$ be the set of 7-uple as defined in the previous section. We have:  
\begin{eqnarray}
\nonumber  I(p_1, p_2, p_3, N) &=& \left \{(\tau, S_{0}, S_{1}, S_{2}, \chi, \sigma_{1}, \sigma_{2}) \right.  \in\\
\nonumber && \bigcup_{a, b, c \geq 0} \left \{ CT(p_1, p_2, p_3, a, b, c) \times P_{p_1 - 1 + p_2 - a}([N]) \right. \\
\nonumber &&  \times P_{p_1 - 1 + p_3 - 1- c}([N -1]) \times P_{p_3 + p_2 - 1- b}([N - 1]) \\
 \nonumber && \left.  \times OP_{p_3 - b - c}([N])\times \Sigma_{N - p_1 + 1- p_3 + c}\times \Sigma_{N - p_2 - p_3 + b} \mid S_0 \cap \chi = \emptyset \right \}
\end{eqnarray}
We prove our main theorem by showing :
\begin{theorem} The mapping $\Theta$ defined by:
\label{th2}
\begin{eqnarray}
\nonumber   CC(p_{1}, p_{2}, p_{3}, N) \longrightarrow && I(p_1,p_2,p_3, N)\\
\nonumber (\pi_{1}, \pi_{2}, \pi_{3}, \alpha_{1}, \alpha_{2}) \longmapsto&& (\tau, S_{0}, S_{1}, S_{2}, \chi, \sigma_{1}, \sigma_{2})
\end{eqnarray}
is actually a bijection.
\end{theorem}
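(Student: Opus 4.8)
The plan is to prove Theorem~\ref{th2} by constructing an explicit inverse $\Theta^{-1}\colon I(p_1,p_2,p_3,N)\to CC(p_1,p_2,p_3,N)$ and checking that $\Theta^{-1}\circ\Theta=\mathrm{id}$ and $\Theta\circ\Theta^{-1}=\mathrm{id}$. A useful preliminary step is to record that $\Theta$ is well defined, i.e. that its image lies in $I$: the stated sizes of $S_0,S_1,S_2,\chi$ and the orders of $\sigma_1,\sigma_2$ come from counting the distinguished elements $m_1^i$, $m_{2}^{'j}$, $m_3^k$ and their images $\alpha_2\alpha_3(m_{2}^{'j})$, $\alpha_3(m_3^k)$, and then subtracting the coincidences among them, which by the construction are exactly the $a$, $b$ and $c$ triangles of $\tau$; the membership $\tau\in CT(p_1,p_2,p_3,a,b,c)$ — in particular that each non-tree edge joins a vertex to the rightmost descendant of one of its descendants, in the white--black--grey cyclic order — follows from Lemma~\ref{wbgt} together with the definitions of the last-passage vertices and of the tree $T$; the disjointness $S_0\cap\chi=\emptyset$ and the facts $N\in S_1\cap S_2$ are noted during the construction; and the only computational point, that $\widetilde\sigma_1$ and $\widetilde\sigma_2$ are bijections onto $[N]\setminus S_1$ and $[N]\setminus S_2$, reduces to unfolding the definitions of $E_1,E_2,S_1,S_2$ and using that $\lambda_1,\lambda_2,\lambda_3,\alpha_2,\alpha_3$ are permutations of $[N]$.

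To invert, I would recover from $(\tau,S_0,S_1,S_2,\chi,\sigma_1,\sigma_2)$, in order: first the ordered tree $t$ (delete the non-tree edges of $\tau$), its reverse-labelling $T'$, hence abstract vertex labels in each colour, the triple $(a,b,c)$, and for each triangle the pair of vertices it joins. Next I would reconstruct the three partitions together with the relabelling permutations $\lambda_1,\lambda_2,\lambda_3$: slicing the sorted set $S_0$ according to the black-child degrees of the white vertices of $t$ (corrected by the $a$-triangles) yields $\lambda_1(m_1^i)=|\pi_1^1|+\dots+|\pi_1^i|$ and the values $\lambda_1(\alpha_2\alpha_3(m_{2}^{'j}))$ attached to each black vertex $j$, and similarly $S_1$ yields the grey block data and $S_2$ the black block data; combined with the ordered set $\chi$ and the triangle elements of $S_0$ this pins down $\lambda_1\{\alpha_3(m_3^k)\}$ as well, hence the sets $E_1,E_2$ and the partial maps $\widetilde\sigma_1=\rho_3^{-1}\sigma_1\rho_1$, $\widetilde\sigma_2=\rho_4^{-1}\sigma_2\rho_2$. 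Finally $\alpha_3^{-1}$ is reconstructed from $\widetilde\sigma_1$ on the bulk of $[N]$ together with the ordering constraints of the correctness lemma for $T$ — which force the images of the remaining points $\{m_1^i\}\cup\{\alpha_3(m_3^k)\}$ into the slices of $S_1$ prescribed by the parent/child and left-to-right relations of $T$ — and then $\alpha_2^{-1}$ is reconstructed the same way from $\widetilde\sigma_1$ and $\widetilde\sigma_2$; one sets $\alpha_1=\gamma_N\,\alpha_3^{-1}\,\alpha_2^{-1}$ and recovers $\pi_1,\pi_3$ as $\lambda_1^{-1},\lambda_3^{-1}$ of the reconstructed interval blocks and $\pi_2=\alpha_3(\lambda_2^{-1}(\text{interval blocks}))$.

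With $\Theta^{-1}$ defined, the two round trips are verified by running the two constructions in parallel. The identity $\Theta^{-1}\circ\Theta=\mathrm{id}$ is the statement that the slicing recipe returns the block sizes it started from and that the forced placements in the last step return the original images $\alpha_3^{-1}(m_1^i)$ and their analogues; this is exactly what Lemma~\ref{wbgt} and the correctness lemma for $T$ were set up to give. The identity $\Theta\circ\Theta^{-1}=\mathrm{id}$ requires in addition checking that the object produced by $\Theta^{-1}$ genuinely lies in $CC$ — that each $\pi_\ell$ is a union of cycles of $\alpha_\ell$ (equivalently, stable under it), that $\alpha_3=\alpha_2^{-1}\alpha_1^{-1}\gamma_N$, and that the points singled out in the last step really are the maxima $m_1^i$, $m_{2}^{'j}$, $m_3^k$ of the blocks containing them. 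I expect this internal-consistency step to be the main obstacle: one must show that the tree-order inequalities determine every placement uniquely, without contradiction, and that stability of the partitions is preserved under the reconstruction. This is precisely where the particular choice of last-passage vertices and of the white, black and grey traversals carries the argument, and it is the feature that makes the $3$-factor case substantially harder than the $2$-factor case of \cite{SV}.
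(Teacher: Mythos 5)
Your overall architecture (build an explicit inverse, then check both round trips) is the same as the paper's, and the early stages of your reconstruction are right: from $\tau$, $S_0$, $S_1$, $S_2$, $\chi$ one does recover $t$, the triple $(a,b,c)$, the values $\lambda_1(m_{1}^{i})$, $\lambda_1(\alpha_2\alpha_3(m_{2}^{'j}))$, $\lambda_1(\alpha_3(m_{3}^{k}))$ and their analogues, hence the interval partitions $\lambda_1(\pi_1)$, $\lambda_2(\alpha_3^{-1}\pi_2)$, $\lambda_3(\pi_3)$ and the extensions $\overline{\sigma_1}=\lambda_3\circ\alpha_3^{-1}\circ\lambda_1^{-1}$, $\overline{\sigma_2}=\lambda_2\circ\alpha_3^{-1}\circ\alpha_2^{-1}\circ\lambda_1^{-1}$ of the partial permutations. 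The genuine gap is at the decisive step: knowing $\overline{\sigma_1}$ and $\overline{\sigma_2}$ does \emph{not} give you $\alpha_3^{-1}$ or $\alpha_2^{-1}$, because the relabelling permutations $\lambda_1,\lambda_2,\lambda_3$ are themselves unknown; likewise $\lambda_1(\pi_1)$ does not yield $\pi_1$ until $\lambda_1$ is pinned down. The ordering constraints of the correctness lemma for $T$ only concern the distinguished maxima, not the generic elements of $[N]$, so they cannot ``force'' the placements you need. The only available leverage is the combination of (i) each $\lambda_j$ being the increasing bijection from each block onto an interval and (ii) the global constraint $\alpha_1\alpha_2\alpha_3=\gamma_N$. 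The paper exploits this by an induction along the orbit of $\gamma_N$: set $\lambda_1(1)=\min\lambda_1(\pi_1^{p_1})$, then alternately take $\lambda_3(i)$ to be the minimal not-yet-attributed element of the grey block containing $\lambda_1(i)$, pass through $\overline{\sigma_1}^{-1}$ to get $\lambda_1(\alpha_3(i))$, determine $\lambda_2(i)$ the same way, pass through $\overline{\sigma_2}^{-1}$, and determine $\lambda_1(i+1)$. Your proposal contains no substitute for this mechanism, and without it neither the partitions nor the permutations can be expressed in the original labels.

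Relatedly, the ``internal consistency'' you flag as the main obstacle is resolved in the paper not by tree-order inequalities but by proving that this orbit-following induction never stalls: a failure at step $h$ would mean some block had been exhausted prematurely, which by a counting argument is impossible except possibly for the root white block (where $\lambda_1(1)$ was assigned for free); and for that case one shows that exhaustion of a block forces exhaustion of the blocks of all its descendant vertices in $\tau$, so exhaustion of the root block (which contains $1$) would mean the procedure had already terminated --- a contradiction. This counting-plus-tree-structure argument is precisely what delivers surjectivity, and the stability of the reconstructed partitions under $\alpha_1,\alpha_2,\alpha_3$ then falls out of the step-by-step formulas. As written, your proposal names the obstacle but does not supply the idea that overcomes it.
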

The proof of this theorem is detailed in the next section. Theorem \ref{th1} is a direct consequence of theorem \ref{th2}. Indeed, we have the following lemma:
\begin{lemma} The cardinality of the image set of $\Theta$ verifies:
\label{l : imth}
\begin{eqnarray}
 \mid I(p_1,p_2,p_3, N) \mid = \frac{{N!}^2}{p_1!p_2!p_3!}\binomial{N-1}{p_3-1} \sum_{a \geq 0} \binomial{N-p_2}{p_1-1-a}\binomial{N-p_3}{a}\binomial{N-1-a}{N-p_2}
\end{eqnarray}
\end{lemma}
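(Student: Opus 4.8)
The plan is to compute $|I(p_1,p_2,p_3,N)|$ directly from its definition as a union over $(a,b,c)$ of products of sets whose sizes are all known. The cardinalities of the factors $P_\bullet(\cdot)$, $OP_\bullet(\cdot)$ and $\Sigma_\bullet$ are elementary binomials and factorials; the cardinality of $CT(p_1,p_2,p_3,a,b,c)$ is given by Lemma \ref{l : CT}; and the only non-product constraint is the disjointness $S_0\cap\chi=\emptyset$. So the first step is to isolate that constraint: fixing the cactus tree and the sizes, the pair $(S_0,\chi)$ with $S_0\in P_{p_1-1+p_2-a}([N])$, $\chi\in OP_{p_3-b-c}([N])$ and $S_0\cap\chi=\emptyset$ can be counted as
$$
\binomial{N}{p_1-1+p_2-a}\,\frac{(N-p_1-p_2+a)!}{(N-p_1-p_2+a-p_3+b+c)!},
$$
i.e. choose $S_0$ first, then injectively place the ordered set $\chi$ into the complement. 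The remaining factors $S_1,S_2,\sigma_1,\sigma_2$ are then unconstrained, contributing
$$
\binomial{N-1}{p_1-1+p_3-1-c}\binomial{N-1}{p_3+p_2-1-b}(N-p_1+1-p_3+c)!\,(N-p_2-p_3+b)! .
$$

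The second step is bookkeeping: multiply these by $|CT(p_1,p_2,p_3,a,b,c)|$ from Lemma \ref{l : CT} and sum over $a,b,c\ge 0$. I would first do the sums over $b$ and $c$, since the $CT$ formula, the two $S$-binomials, and the $\chi$-placement factor are the only places $b,c$ occur. The prefactor $\bigl(a(b-p_3)+p_2p_3\bigr)/(p_1p_2p_3)$ in $|CT|$ is the one awkward piece: it is affine in $b$, so the $b$-sum splits into a "plain" Vandermonde-type sum and one weighted by $b$; each should collapse via the Vandermonde/Chu convolution (after rewriting the trinomial coefficients $\binomial{x}{u,v}=\binomial{x}{u}\binomial{x-u}{v}$ and absorbing factorials into binomials). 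The $c$-sum is cleaner and should also be a single Vandermonde convolution. After these two sums one is left with a single sum over $a$, which should already be in the shape
$$
\frac{{N!}^2}{p_1!p_2!p_3!}\binomial{N-1}{p_3-1}\binomial{N-p_2}{p_1-1-a}\binomial{N-p_3}{a}\binomial{N-1-a}{N-p_2},
$$
matching the statement, so no further summation is needed — the $a$-sum is left explicit exactly as in Lemma \ref{l : imth}.

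The main obstacle I anticipate is purely computational: correctly handling the affine weight $a(b-p_3)+p_2p_3$ through the $b$-summation and making the factorial-to-binomial conversions so that the ranges line up for a clean Vandermonde collapse. It is easy to misplace an index or an off-by-one in the upper arguments $p_2+p_3-1-b$, $p_1+p_3-2-c$, $p_1+p_2-1-a$ versus the sizes $p_3+p_2-1-b$, $p_1-1+p_3-1-c$, etc., and the $\chi$-placement term mixes $a$, $b$ and $c$ into a single falling factorial, so the three sums are not fully independent and must be peeled off in the right order ($c$ and $b$ before $a$). Once the two inner Vandermonde sums are verified, assembling the constant $\tfrac{{N!}^2}{p_1!p_2!p_3!}\binomial{N-1}{p_3-1}$ and recognizing the leftover $a$-sum is routine. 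I would relegate the detailed algebra to the same appendix as the proof of Lemma \ref{l : CT}.
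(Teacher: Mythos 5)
Your proposal follows essentially the same route as the paper: multiply the cardinality of $CT(p_1,p_2,p_3,a,b,c)$ from Lemma \ref{l : CT} by the binomial/factorial counts of $S_0,S_1,S_2,\sigma_1,\sigma_2$, handle the constraint $S_0\cap\chi=\emptyset$ by choosing $S_0$ first and then injecting the ordered set $\chi$ into its complement, and collapse the $b$- and $c$-sums by Vandermonde convolution so that only the $a$-sum remains explicit. The only slip is the anticipated off-by-one: the complement of $S_0$ has $N-p_1+1-p_2+a$ elements (not $N-p_1-p_2+a$), so the $\chi$-placement factor should be the falling factorial of $N-p_1+1-p_2+a$ of length $p_3-b-c$, exactly as in the paper's equation for $\mid I\mid$.
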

\begin{Proof}
Cardinality of $CT(p_1,p_2,p_3,a,b,c)$ has already been stated in lemma \ref{l : CT}.
The numbers of ways to choose the unordered subsets $S_{0}$, $S_{1}$ and $S_{2},$ are defined by  binomial coefficients $$\binomial{N}{p_1-1+p_2-a}, \binomial{N-1}{p_1-1+p_3-1-c},  \binomial{N-1}{p_3+p_2-1-b}$$ (respectively).
Then, since $S_0 \cap \chi = \emptyset$, the number of ways to choose $\chi$ is equal to $$\binomial{N-p_1+1-p_2+a}{p_3-b-c}(p_3-b-c)!$$
The number of permutations $ \sigma_{1}, \sigma_{2}$ is equal to $(N-p_1+1-p_3+c)!, (N-p_2-p_3+b)!$\\
Combining everything and summing over $(a,b,c)$ gives:
\footnotesize
 \begin{eqnarray}
 \nonumber \mid I(p_1,p_2,p_3, N) \mid &=& \sum_{a,b,c \geq 0} \frac{\left (a(b-p_3)+p_2p_3 \right)}{p_1p_2p_3}\binomial{p_1+p_2-1-a}{p_1-1, \,\,p_2-a-b}\binomial{p_2+p_3-1-b}{p_2-1, \,\,p_3-b-c} \\
\nonumber &\times&  \binomial{p_1+p_3-2-c}{p_3-1, \,\,p_1-1-a-c}\binomial{N}{p_1-1+p_2-a}\binomial{N-1}{p_1-1+p_3-1-c}\\
\nonumber &\times& \binomial{N-1}{p_3+p_2-1-b}\binomial{N-p_1+1-p_2+a}{p_3-b-c}(p_3-b-c)!\\
\label{ee} &\times& (N-p_1+1-p_3+c)!(N-p_2-p_3+b)!
\end{eqnarray}
\normalsize
Equation (\ref{ee}) leaves room for a lot of simplifications on the binomial coefficients:
 \begin{eqnarray}
\nonumber \mid I(p_1,p_2,p_3, N) \mid &&= \frac{{(N-1)!}^2}{p_1!p_2!p_3!}\sum_{a,b,c \geq 0} \left (a(b-p_3)+p_2p_3 \right)\\
&& \times \binomial{N}{a,\,\,b,\,\,c,\,\,p_1-1-a-c, \,\,p_2-a-b,\,\,p_3-b-c}
\end{eqnarray}
where the last element on the right hand side of the equation is a multinomial coefficient.
The computation is conducted to the final result by arranging properly the terms depending on $b$ and $c$, and summing over these two parameters with the help of Vandermonde's convolution. 
\end{Proof}
\begin{remark}
The formula above can be symmetrized by noticing that $a(b-p_3) +p_2p_3 = ab + (p_2-a-b)(p_3-b-c)+bp_3+b(p_2-a-b)+c(p_2-a-b)$  and doing:
\footnotesize
\begin{eqnarray*}
&&\sum_{a,b,c}ab\binomial{N}{a,b,c,p_1-1-a-c,p_2-a-b,p_3-b-c}\\
&&\phantom{lalaaaaaaaaaaaaaalalala}=\sum_{a,b,c}aN\binomial{N-1}{a,b-1,c,p_1-1-a-c,p_2-a-b,p_3-b-c}\\
&&\phantom{lalaaaaaaaaaaaaaalalala}=\sum_{a,b,c}aN\binomial{N-1}{a,b,c,p_1-1-a-c,p_2-a-b-1,p_3-b-1-c}
\end{eqnarray*}
\normalsize
\noindent Furthermore:
\footnotesize
\begin{eqnarray*}
&&\sum_{a,b,c}\left [(p_2-a-b)(p_3-b-c)+bp_3+b(p_2-a-b)+c(p_2-a-b)\right]\\
&&\times\binomial{N}{a,b,c,p_1-1-a-c,p_2-a-b,p_3-b-c}\\
 &&=  \sum_{a,b,c}N(N-a)\binomial{N-1}{a,b,c,p_1-1-a-c,p_2-1-a-b,p_3-1-b-c}
\end{eqnarray*}
\normalsize
\noindent Putting everything together, we have:
 \begin{eqnarray}
 |I| = \frac{{N!}^2}{p_1!p_2!p_3!}\sum_{a,b,c \geq 0}\binomial{N-1}{a,b,c,p_1-1-a-c,p_2-1-a-b, p_3-1-b-c}
\end{eqnarray}
This symmetric version is a direct consequence of Jackson's formula in \cite{J}.
\end{remark}
\section{Proof of the bijection}
\label{proof}
{\bf Injectivity}\\
For the first step of the proof we focus on injectivity of $\Theta$. Let $(\tau, S_{0}, S_{1}, S_{2}, \chi, \sigma_{1}, \sigma_{2})$ be the image by $\Theta$ of $(\pi_{1}, \pi_{2}, \pi_{3}, \alpha_{1}, \alpha_{2})\in CC(p_{1}, p_{2}, p_{3}, N)$. Our aim is to show that $(\pi_{1}, \pi_{2}, \pi_{3}, \alpha_{1}, \alpha_{2})$ is uniquely determined by  $(\tau, S_{0}, S_{1}, S_{2}, \chi, \sigma_{1}, \sigma_{2})$. We proceed in the following way.\\
\noindent First we note that $S_0$ and $\tau$ determines  $\lambda_1 (\{m_{1}^{i}, \,\alpha_{2}\circ \alpha_{3}(m_{2}^{'j})\mid 1\leq i\leq p_{1}-1,\,1\leq j \leq p_{2}\})$ since $S_0$ gives the set of values of all these elements (by construction) and $\tau$ gives the relations of order on them (including equalities). Indeed, according to the construction of $\tau$ and $\lambda_1$, the $\lambda_1(m_1^{l})$'s are sorted in increasing order with respect to the reverse label order in $\tau$, the $\lambda_1 (\alpha_{2}\circ \alpha_{3}(m_{2}^{'l}))$'s associated with black vertices descending from the same white vertex are sorted in increasing order from left to right. Assume the $j'$th black vertex in the reverse labeling of $\tau$ is the descendant of the $i'th$ white vertex. Necessarily $\lambda_1 (\alpha_{2}\circ \alpha_{3}(m_{2}^{'j})) \in \lambda_1 (\pi_1^i)$. As a consequence we have one of the following three cases: $\lambda_1 (\alpha_{2}\circ \alpha_{3}(m_{2}^{'j})) > \lambda_1(m_{1}^{l}), \mbox{ if } l<i, $ $\lambda_1 (\alpha_{2}\circ \alpha_{3}(m_{2}^{'j})) < \lambda_1(m_{1}^{l}) \mbox{ if } i<l $ or $\lambda_1 (\alpha_{2}\circ \alpha_{3}(m_{2}^{'j})) = \lambda_1(m_{1}^{i})$ if the $j'$th black vertex and the $i'$th white vertex belong to the same triangle rooted in grey vertex.\\
\indent Similarly the sets $\lambda_{3}( \{m_{3}^{k}, \, \alpha_{3}^{-1}(m_{1}^{i})\mid 1\leq k\leq p_{3},\, 1\leq i \leq p_{1}-1\})$ and $\lambda_{2}( \{m_{2}^{'j}, \, \alpha_{3}^{-1}\circ \alpha_{2}^{-1}\circ \alpha_{3}(m_{3}^{k})\mid 1\leq j\leq p_{2},\, 1\leq k \leq p_{3}\})$ are uniquely determined.\\ 
\indent Looking at the triangles rooted in a white or a black vertex within $\tau$ we can determine
 $\{\lambda_1(\alpha_3(m_3^k)) \mid \lambda_1(\alpha_3(m_3^k)) \in S_0\}$. The complementary ordered set $\chi$ uniquely determines the $\{\lambda_1(\alpha_3(m_3^k)) \mid \lambda_1(\alpha_3(m_3^k)) \notin S_0\}$.

\begin{example}
Assume $(\tau, S_{0}, S_{1}, S_{2}, \chi, \sigma_{1}, \sigma_{2})$ is given by figure \ref{ex rec}.
\begin{figure}
\begin{center}\includegraphics[width=60mm]{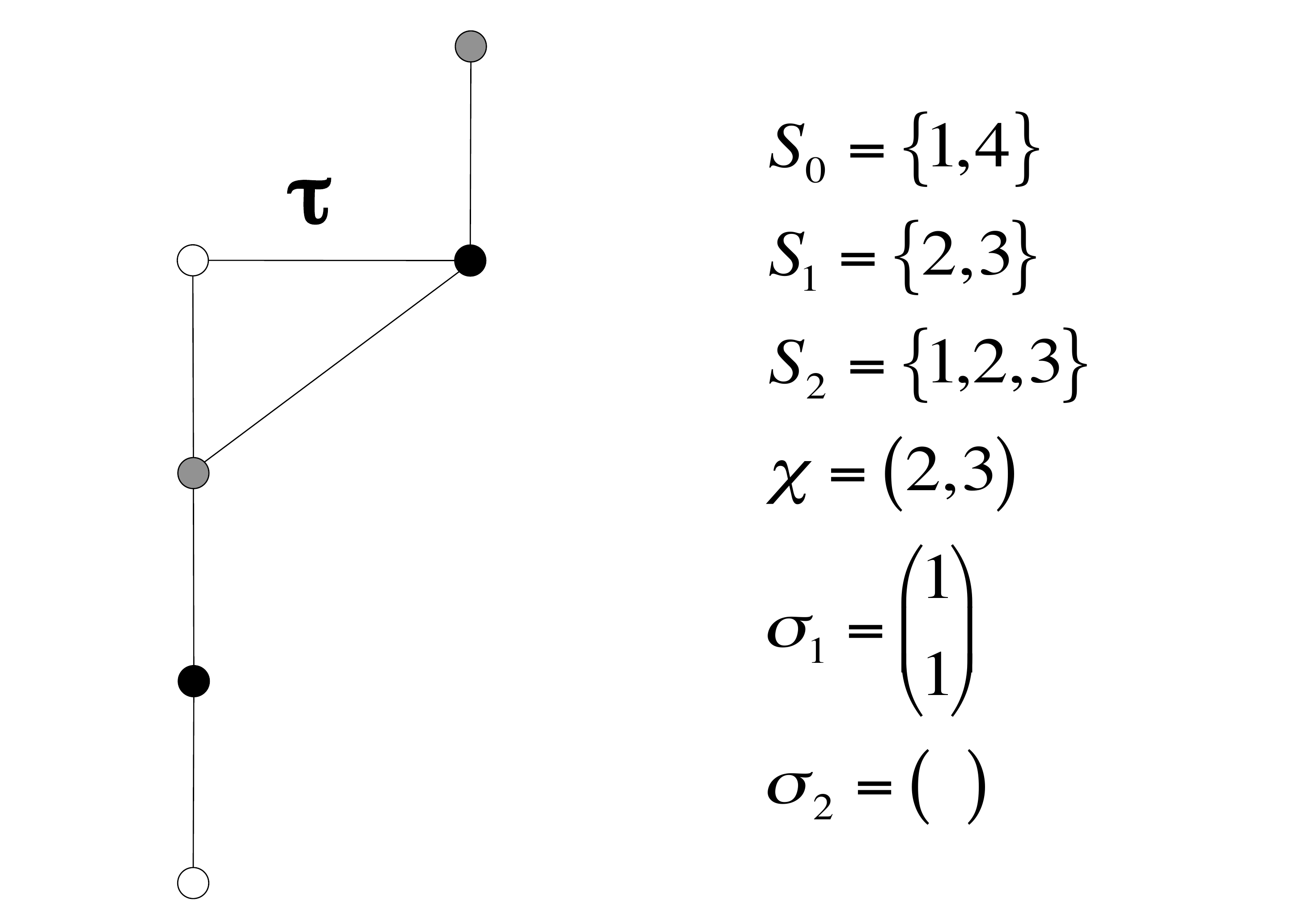}
\caption{Example of a cactus tree, sets and permutations}
\label{ex rec}
\end{center} 
\end{figure}
Following the construction rules stated above, we have necessarily: 
\begin{itemize}
\label{recons}
\item[(i)] $1=\lambda_1((\alpha_{2}\circ \alpha_{3}(m_{2}^{'1}))=\lambda_1(m_{1}^{1})$ since
the first black and first white vertices of the cactus tree belong to the same triangle and
$1$ is the smallest in $S_0$
\item[(ii)] $\lambda_1((\alpha_{2}\circ \alpha_{3}(m_{2}^{'2})) = 4$
\item[(iii)] $\lambda_3(m_{3}^{1})=2$, $\lambda_3(\alpha_{3}^{-1}(m_{1}^{1}))=3$ and $\lambda_3(m_{3}^{2})=4$
\item[(iv)] $\lambda_{2}(\alpha_{3}^{-1}\circ \alpha_{2}^{-1}\circ \alpha_{3}(m_{3}^{1})) = 1$, $\lambda_2(m_{2}^{'1}) = 2$, $\lambda_{2}(\alpha_{3}^{-1}\circ \alpha_{2}^{-1}\circ \alpha_{3}(m_{3}^{2})) = 3$ and $\lambda_2(m_{2}^{'2})=4$
\item[(v)] $\lambda_1(\alpha_3(m_3^1)) = 2$, $\lambda_1(\alpha_3(m_3^2)) = 3$ 
\end{itemize}
\end{example}

\indent The following step is to notice that elements reconstructed so far uniquely determine the supports of the partial permutations $\widetilde{\sigma_1}$ and $\widetilde{\sigma_2}$ and the partial permutations themselves. Then we define extensions $\overline{\sigma_1} = \lambda_3\circ\alpha_3^{-1}\circ\lambda_1^{-1}$ and $\overline{\sigma_2} = \lambda_2\circ\alpha_3^{-1}\circ\alpha_2^{-1}\circ\lambda_1^{-1}$
of $\widetilde{\sigma_1}$
and $\widetilde{\sigma_2}$ on the whole set $[N]$ by setting 
$\overline{\sigma_1}(i) = \widetilde{\sigma_1}(i)$ if $i$ belongs to the support of $\widetilde{\sigma_1}$, and $\lambda_3(m_3^k) = \overline{\sigma_1} (\lambda_1(\alpha_3(m_3^k)))\mbox{ or } \lambda_3(\alpha_3^{-1}(m_1^i)) = \overline{\sigma_1} (\lambda_1(m_1^i))$ otherwise (we have a similar construction for $\overline{\sigma_2}$). Now the partition $\lambda_1(\pi_1)$ is uniquely determined as well since
$\lambda_1(\pi_{1}^{1})=[\lambda_1(m_{1}^{1})]$, $\lambda_1(\pi_{1}^{i})=[\lambda_1(m_{1}^{i})]\setminus[\lambda_1(m_{1}^{i-1})] \mbox{ for } 2\leq i\leq p_1$.
Similar reconstruction applies to  $\lambda_2(\alpha_3^{-1}\pi_2)$ and $\lambda_3(\pi_3)$. By applying $\overline{\sigma_1}^{-1}$ (resp. $\overline{\sigma_2}^{-1}$) to $\lambda_3(\pi_3)$ (resp. $\lambda_2(\alpha_3^{-1}\pi_2)$) we determine uniquely $\lambda_1(\pi_3)$ (resp.  $\lambda_1(\pi_2)$).

\begin{example}
Following example \ref{recons}, we find that the domain of $\widetilde{\sigma_1}$ is $[4]\setminus\{1,2,3\} = \{4\}$. Its range is $[3]\setminus\{2,3\} = \{1\}$. The construction of $\overline{\sigma}_1$ and $\overline{\sigma}_2$ yields:
\[  \overline{\sigma}_{1} = \left (\begin{array}{c}
\begin{array}{cccclcccc}1&2&3&4\\ 3&2&4&1\end{array}
\end{array} \right )  \phantom{lyu} \overline{\sigma}_{2} = \left (\begin{array}{c}
\begin{array}{cccclcccc} 1&2&3&4\\2&1&3&4\end{array}
\end{array} \right )     \]
\noindent  As a next step, we have:
\begin{eqnarray}
\nonumber &\lambda_1(\pi_{1})=\{\{1\};\{2,3,4\}\}, \lambda_3(\pi_{3})=\{\{1,2\};\{3,4\}\}, \lambda_1(\pi_{3})=\{\{2,4\};\{1,3\}\}&\\
\nonumber &\lambda_2(\alpha_3^{-1}\pi_{2})=\{\{1,2\};\{3,4\}\}, \lambda_1(\pi_{2})=\{\{1,2\};\{3,4\}\}&
\end{eqnarray}

\noindent Figure \ref{bb} depicts the resulting partitioned cactus.

\begin{figure}[h]
\begin{center}
\includegraphics[width=35mm]{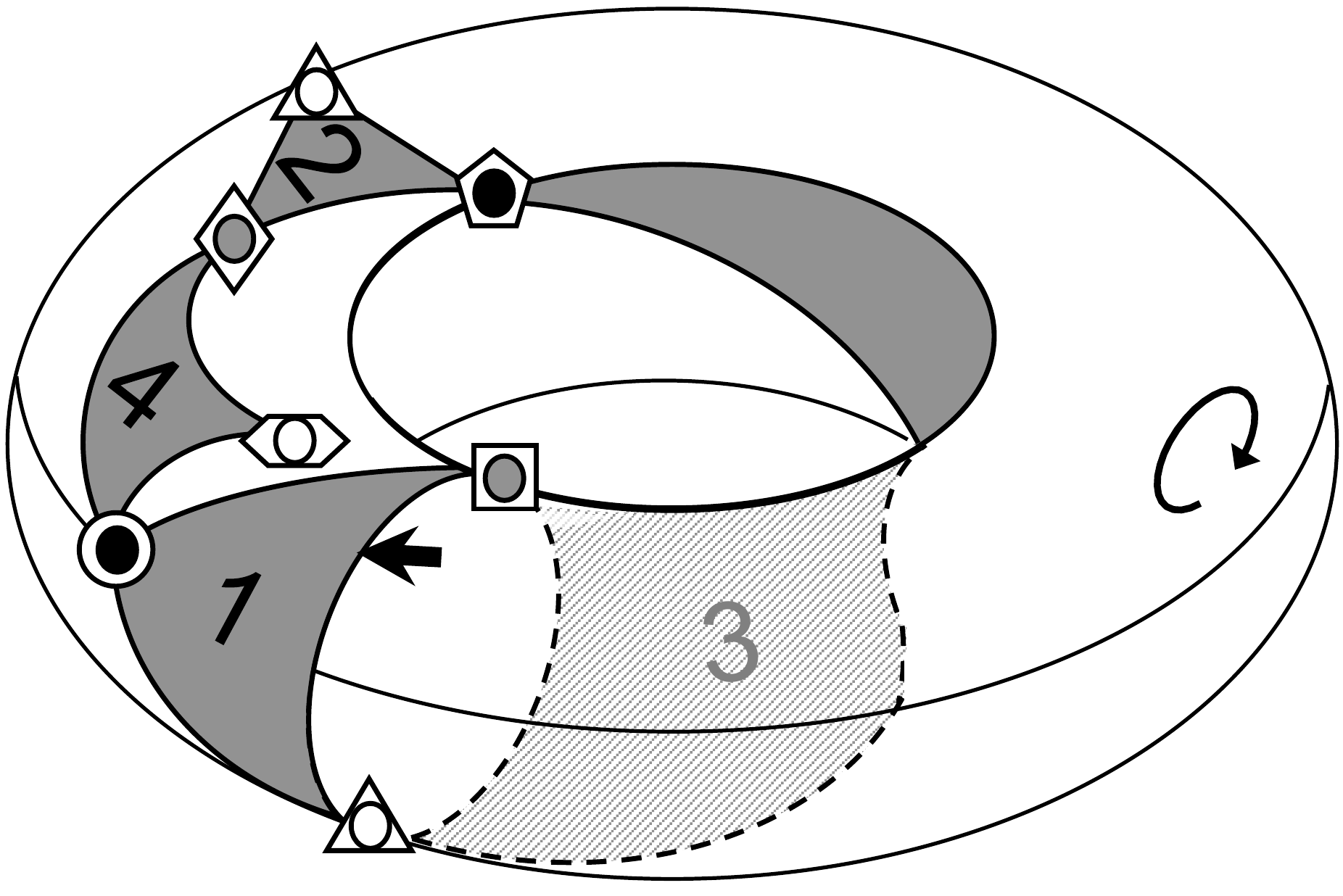}
\caption{The reconstructed partitioned cactus}
\label{bb}
\end{center} 
\end{figure}

\end{example}

\indent Then we show by induction that the relabeling permutations $\lambda_1$, $\lambda_2$, $\lambda_3$ are uniquely determined.
The element $\lambda_1(1)$ is uniquely determined as the minimum element of $\lambda_1(\pi_1^{p_1})$.
Assume that $\lambda_1(l)$, $\lambda_2(l-1)$ and $\lambda_3(l-1)$ have been uniquely determined for $l=1\ldots i$ for a given $i< N$. As $\lambda_3$ is an increasing function on the blocks of $\pi_3$, the element $\lambda_3(i)$ is necessarily the minimal (not yet attributed within the procedure) element of $\lambda_3(\pi_3^{k_i})$, where $k_i$ is the index of the grey block such that $\lambda_1(i) \in \lambda_1(\pi_3^{k_i})$. Hence $\lambda_3(i)$ is uniquely determined. We proceed by computing  $\lambda_1(\alpha_3(i)) = \overline{\sigma}_1^{-1}(\lambda_3(i))$. Then we identify $\lambda_2(i)$ as being the minimal (not yet attributed within the procedure) element of $\lambda_2(\alpha_3^{-1}(\pi_2^{j_i}))$ where $j_i$ is the index of the black block such that $\lambda_1(\alpha_3(i)) \in \lambda_1(\pi_2^{j_i})$. We compute $\lambda_1(\alpha_2\alpha_3(i)) = \overline{\sigma}_2^{-1}(\lambda_2(i))$. Then we identify $\lambda_1(i+1)$ as being the minimal (not yet attributed within the procedure) element of $\lambda_1((\pi_1^{l_i}))$ where $l_i$ is the index of the white block such that $\lambda_1(\alpha_2\alpha_3(i)) \in \lambda_1(\pi_1^{l_i})$. (For $\pi_1$ is stable by $\alpha_1$, the element $i+1= \alpha_1\alpha_2\alpha_3(i)$ belongs to the same white block as $\alpha_2\alpha_3(i)$ and $\lambda_1(i+1)$ is uniquely determined.) Finally, $\lambda_2(N)$ and $\lambda_3(N)$ are uniquely determined.

\begin{example}
Proceeding with our example, we find that the smallest integer of $\lambda_1(\pi_1^2)$ is equal $2$ and as a consequence
$\lambda_1(1) = 2$.
As $2$ belongs to $\lambda_1(\pi_3^1)$ and the smallest (not yet attributed) element of $\lambda_3(\pi_3^1)$ is $1$, we have
$\lambda_3(1) = 1$.
Then, $\lambda_1(\alpha_3^1(1)) = \overline{\sigma}_1^{-1}(1) = 4$ and looking at $\lambda_1(\pi_2)$, we get 
$\lambda_2(1) = 3$.
We compute $\lambda_1(\alpha_2\alpha_3(1)) = \overline{\sigma}_2^{-1}(\lambda_2(1)) = 3$. The minimal not yet attributed element of the block of $\lambda_1(\pi_1)$ containing $2$ is equal to $3$. As a result we obtain
$\lambda_1(2) = 3$.
The full reconstruction yields $\lambda_1 = 2341$, $\lambda_3 = 1324$, $\lambda_2=3124$.


\end{example}

\noindent As a direct consequence, the partitioned cactus is uniquely determined since 
\begin{eqnarray}
\nonumber &\label{rec1} \pi_1 = \lambda_1^{-1}(\lambda_1(\pi_1)),\; \pi_2 = \lambda_1^{-1}\circ\overline{\sigma_2}^{-1}(\lambda_2(\alpha_3^{-1}(\pi_2))),\; \pi_3 = \lambda_1^{-1}\circ\overline{\sigma_1}^{-1}(\lambda_3(\pi_3))&\\
\nonumber  &\label{rec2} \alpha_1 = \gamma_N\circ\lambda_2^{-1}\circ \overline{\sigma_2}\circ\lambda_1,\;\;\; \alpha_2 = \lambda_1^{-1}\circ \overline{\sigma_2}^{-1}\circ \lambda_2\circ\lambda_3^{-1}\circ\overline{\sigma_1}\circ\lambda_1&
\end{eqnarray}

\begin{example}
\noindent We have $\pi_1 = \{\{4\};\{1,2,3\}\}$, $\pi_2 = \{\{1,4\};\{2,3\}\}$, $\pi_3 = \{\{1,3\};\{2,$ $4\}\}$ and $\alpha_1 = (1\,3)(2)(4)$, $\alpha_2 = (1\,4)(2\,3)$
\end{example}
\noindent {\bf Surjectivity}\\
The final step of this proof is to show that $\Theta(p_1, p_2, p_3, N)$ is a surjection. Let $(\tau, S_{0}, S_{1}, S_{2}, \chi, \sigma_{1}, \sigma_{2})$ be any 7-uple of $I(p_1, p_2, p_3, N)$. Provided that $S_0 \cap \chi = \emptyset$ we can always define the partitions $\lambda_1(\pi_1)$, $\lambda_2(\alpha_3^{-1}(\pi_2))$ and $\lambda_3(\pi_3)$ and the permutations $\overline{\sigma_1}$ and $\overline{\sigma_2}$ according to the reconstruction procedure we pointed out in the previous subsection. As a direct consequence, we can always define $\lambda_1(\pi_2)$ and $\lambda_1(\pi_3)$ using that $\lambda_1(\pi_2) = \overline{\sigma_2}^{-1}(\lambda_2(\alpha_3^{-1}(\pi_2)))$ and $\lambda_1(\pi_3) = \overline{\sigma_1}^{-1}(\lambda_3(\pi_3))$. 
To prove surjectivity of $\Theta$ we need to show that the iterative reconstruction of $\lambda_1$, $\lambda_2$, $\lambda_3$ defined in the injectivity proof can always be fulfilled and always gives valid objects as an output. The main difficulty lies in the boundary condition. First of all let us remark that we can always define $\lambda_1(1)$ as the minimal element of $\lambda_1(\pi_1^{p_1})$. Suppose now that we were able to reconstruct $\lambda_1(l)$ for $l = 1\ldots h$ with $h < N$, as well as $\lambda_2(l)$ and $\lambda_3(l)$ for $l = 1\ldots h-1$. We would not be able to reconstruct $\lambda_3(h)$ if and only if all the elements of $\lambda_3(\pi_3^{k_h})$ such that $\lambda_1(h) \in \lambda_1(\pi_3^{k_h})$ have already been allocated during the procedure. Clearly it would mean that $ \mid \lambda_1(\pi_3^{k_h}) \mid+1$ different elements would belong to the set $\lambda_1(\pi_3^{k_h})$ that is an obvious contradiction. The impossibility of reconstructing $\lambda_2(h)$ would be equivalent to the fact that all the elements of $\lambda_2(\pi_2^{j_h})$ with $\overline{\sigma_1}^{-1}(\lambda_3(h)) \in \lambda_1(\pi_2^{j_h})$ have already been allocated. Since $\overline{\sigma_1}$ is a bijection, this leads to a similar contradiction. Same argument applies to reconstruction of $\lambda_1(h+1)$ if $\overline{\sigma_2}^{-1}(\lambda_2(h)) \in \lambda_1(\pi_1^{i_{h+1}})$ with $i_{h+1} \neq p_1$. If $\overline{\sigma_2}^{-1}(\lambda_2(h)) \in \lambda_1(\pi_1^{p_1})$ we can only tell that $\mid \lambda_1(\pi_1^{p_1}) \mid$ different integers belongs to $ \lambda_1(\pi_1^{p_1})$ since $\lambda_1(1)$ was not constructed in the same way as the other $\lambda_1(l)$. However according to our construction, if the black vertex in $\tau$ corresponding to a given block $\pi_{2}^{u}$ is the direct descendant of the white vertex associated with $\pi_{1}^{v}$ then $\overline{\sigma_2}^{-1}(\lambda_2(m_{2}^{'u})) \in \lambda_1(\pi_{1}^{v})$.
Hence, if all the elements of $\lambda_1(\pi_{1}^{v})$ have been used for the reconstruction process, the maximum elements (and therefore all the elements) of all the blocks $\lambda_2(\alpha_3^{-1}(\pi_{2}^{u}))$ corresponding to descending vertices of the white vertex associated with $\lambda_1(\pi_{1}^{v})$ have been used during this process. Similarly we can show that for any vertex of any color, if all of the elements of the corresponding block have been used during the reconstruction process, all the elements of the block corresponding to descending vertices have been used for the reconstruction process. Since $\pi_1^{p_1}$ is associated with the root of the cactus tree $\tau$, if all the elements of $ \lambda_1(\pi_1^{p_1})$ have been used for the reconstruction process, it means that all the elements of all the blocks have been used as well. As a conclusion we state that the reconstruction process came to its expected end and $h=N$ which is a contradiction with our hypothesis.\\
For the final step, let us show that the reconstructed objects are valid 5-tuples of $CC(p_1,p_2,p_3,N)$, i.e. that the stability conditions of the partitions by the permutation are respected. If we have $l \in \pi_1^{i}$, then according to our procedure $\lambda_1^{-1}\circ\overline{\sigma_2}^{-1}\circ\lambda_2(l-1) \in \pi_1^{i}$ for $l \geq 2$. Besides, we showed that $\lambda_1^{-1}\circ\overline{\sigma_2}^{-1}\circ\lambda_2(N) \in \pi_1^{p_1}$ with $1 \in \pi_1^{p_1}$. Hence, for all $l$,  elements $l$ and $\lambda_1^{-1}\circ\overline{\sigma_2}^{-1}\circ\lambda_2\gamma_N^{-1}(l)$ belong to the same block of $\pi_1$ and $\pi_1$ is stable by $\alpha_1^{-1}$ or, equivalently, by $\alpha_1$. 
Similarly, if $\lambda_2(l)\in \lambda_2(\alpha_3^{-1}(\pi_2^{j}))$, i.e. $\lambda_1^{-1}\circ\overline{\sigma_2}^{-1}\circ\lambda_2(l)\in \lambda_1^{-1}\circ\overline{\sigma_2}^{-1}(\lambda_2(\alpha_3^{-1}(\pi_2^{j}))) = \pi_2^{j}$, then $\lambda_1^{-1}\circ\overline{\sigma_1}^{-1}(\lambda_3(l))\in\lambda^{-1} \circ\overline{\sigma_2}^{-1}\circ\lambda_2(\alpha_3^{-1}(\pi_2^{j})) = \pi_2^{j}$. As a result, $\pi_2$ is stable by $\alpha_2 = \lambda_1^{-1}\circ \overline{\sigma_2}^{-1}\circ \lambda_2\circ\lambda_3^{-1}\circ\overline{\sigma_1}\circ\lambda_1$. Trivially, $\pi_3$ is stable by $\alpha_3= \alpha_2^{-1}\circ \alpha_1^{-1}\circ \gamma_N$ since $\alpha_2^{-1}\circ \alpha_1^{-1}\circ \gamma_N(\pi_3) =  \lambda_1^{-1}\circ\overline{\sigma_1}^{-1}(\lambda_3(\pi_3)) = \pi_3 $ according to our definition.
\section{Appendix : cardinality of the sets of cactus trees}
\label{proof}
Enumeration of cactus trees can be easily performed as they are recursive objects and can be decomposed in (i) the white root, (ii) the cactus trees rooted in a black vertex descending from the root, (iii) a triple composed of a black rooted cactus tree, a grey rooted cactus tree, a triangle for each triangle descending  from the root, (iv) the positions of the triangles in the descendant list.
As an example the cactus tree in figure \ref{fig : ex ct} is decomposed on figure \ref{fig : decompLagrange}.
 \begin{figure}[h]
\begin{center}
\includegraphics[width=110mm]{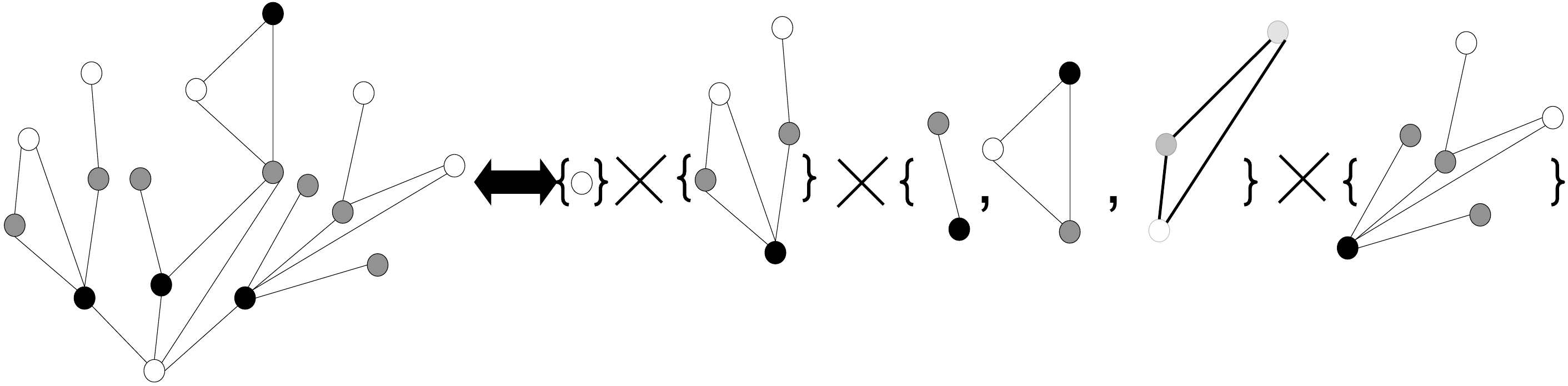}
\caption{Decomposition of the cactus tree in example \ref{ex : ex}}
\label{fig : decompLagrange}
\end{center} 
\end{figure}

\noindent Let $W$ be the generating function of the white rooted cactus trees. We have:
\begin{equation}
W = \sum_{p_1,p_2,p_3,a,b,c}CT(p_{1}, p_{2}, p_{3}, a, b, c)x_1^{p_{1}}x_2^{p_{2}}x_3^{p_{3}} y_1^a y_2^b y_3^c
\end{equation}

\noindent Similarly we define, $B$ (resp. $G$), the generating function of the black rooted cactus trees (resp. grey rooted). Finally,
we denote $T_1$ (resp. $T_2$, $T_3$) the generating function of a grey rooted triangle (resp. white, black). Obviously $T_i = y_i$ and using the previous decomposition, we have :
\begin{equation}
\nonumber W = x_1\sum_{k \geq 0}\sum_{0 \leq j \leq k}\sum_{{\bf i}}B^{k-j}(BGT_2)^{j}
\end{equation}
\noindent where {\bf i} is an integer vector indicating the positions of the $j$ triangles among the $k$ descendants from the root. Going ahead gives:
\begin{eqnarray}
\label{eq W}   \nonumber W = x_1\sum_{k \geq 0}B^k\sum_{0 \leq j \leq k}\binomial{k}{j}(GT_2)^j =\frac{ x_1}{1-B(1+GT_2)}= x_1.\Phi_1(W,B,G,T_1,T_2,T_3)
\end{eqnarray}
\noindent Similarly,
\begin{eqnarray}
\nonumber B = \frac{x_2}{1-G(1+WT_3)}  \label{eq B} = x_2.\Phi_2(W,B,G,T_1,T_2,T_3)\\
\nonumber G = \frac{x_3}{1-W(1+BT_1)} \label{eq G} = x_3.\Phi_3(W,B,G,T_1,T_2,T_3)
\end{eqnarray}
As a direct consequence, the generating functions $W,B,G,T_1,T_2,T_3$ are solutions of a functional equation of the type $(W,B,G,T_1,T_2,T_3) = {\bf x}.{\bf \Phi}(W,B,G,T_1,T_2,T_3)$, where ${\bf x} = (x_1,x_2,x_3,y_1,y_2,y_3)$ and ${\bf \Phi} = (\Phi_i)_{1\leq i \leq 6}$ with $(\Phi_i) = 1$ for $4\leq i \leq 6$.\\
\noindent Using the multivariate Lagrange formula for monomials (see \cite{GJ}), we find:
\footnotesize
\begin{eqnarray}
CT(p_{1}, p_{2}, p_{3}, a, b, c) =\sum_{\{\mu_{ij}\}}\frac{\mid\mid \delta_{ij}k_j-\mu_{ij}\mid\mid}{p_1 p_2 p_3 a b c}\prod_{1\leq i \leq 6}[W^{\mu_{i1}}B^{\mu_{i2}}G^{\mu_{i3}}T_1^{\mu_{i4}}T_2^{\mu_{i5}}T_3^{\mu_{i6}}]\Phi_i^{k_i}
\label{eq lagrange}
\end{eqnarray}
\normalsize
\noindent where $(k_1,k_2,k_3,k_4,k_5,k_6) = (p_{1}, p_{2}, p_{3}, a, b, c)$ and the sum is over all $6\times 6$ integer matrices $\{\mu_{ij}\}$ such that 
\begin{itemize}
\item $\mu_{11} = \mu_{14} = \mu_{16} = \mu_{22} = \mu_{24}=\mu_{25} = \mu_{33} =\mu_{35}=\mu_{36} = 0=\mu_{ij}$ for $i\geq 4$
\item $\mu_{21} + \mu_{31} = p_1 - 1$, $\mu_{12} + \mu_{32} = p_2$, $\mu_{13} + \mu_{23} = p_3$, $\mu_{34} = a$, $\mu_{15} = b$, $\mu_{26} = c$
\end{itemize}
Looking for zero contribution terms in expression (\ref{eq lagrange}), we notice that $G$ and $T_2$ have necessarily the same degree in the formal power series expansion of $\Phi_1$ (see equation (\ref{eq W})). Hence, a non zero contribution of $[W^{\mu_{11}}B^{\mu_{12}}G^{\mu_{13}}T_1^{\mu_{14}}T_2^{\mu_{15}}T_3^{\mu_{16}}]\Phi_1^{k_1}$ implies $\mu_{13} = \mu_{15} = b$. Similar remarks gives non zero contributions only for $\mu_{21} = c$ and $\mu_{32} = a$. As a result only one matrix $\mu$ yields a non zero contribution.\\
\noindent For this particular $\mu$, $\mid\mid \delta_{ij}k_j-\mu_{ij}\mid\mid = abc(p_2 p_3 - (p_3-b)a)$. Then notice that :
\begin{eqnarray}
\nonumber \Phi_1^{p_1} = \sum_{u} \binomial{p_1-1+u}{u}(B(1+GT_2))^{u}= \sum_{u,v} \binomial{p_1-1+u}{u}B^u\binomial{u}{v}(GT_2)^{v}
\end{eqnarray}
As a result, the coefficient in $W^{\mu_{11}}B^{\mu_{12}}G^{\mu_{13}}T_1^{\mu_{14}}T_2^{\mu_{15}}T_3^{\mu_{16}}$ is equal to \linebreak $\binomial{p_1+p_2-1-a}{p_1-1,\,\,p_2-a-b}$.
Using the same argument we have :
\begin{eqnarray*}
[W^{\mu_{21}}B^{\mu_{22}}G^{\mu_{23}}T_1^{\mu_{24}}T_2^{\mu_{25}}T_3^{\mu_{26}}]\Phi_2^{p_2} &=& \binomial{p_2+p_3-1-b}{p_2-1,\,\,p_3-b-c}\\
\left [W^{\mu_{31}}B^{\mu_{32}}G^{\mu_{33}}T_1^{\mu_{34}}T_2^{\mu_{35}}T_3^{\mu_{36}}\right ] \Phi_3^{p_3} &=& \binomial{p_1+p_3-2-c}{p_3-1,\,\,p_1-1-a-c}
\end{eqnarray*}
Putting everything together yields the desired result.

\bibliographystyle{amsalpha}

\begin{thebibliography}{AA}
%
%
 \bibitem{HZ}{\sc J. Harer and D. Zagier}, {\it The Euler characteristic of the moduli space of curves}, 
 Inventiones Mathematicae, {\bf 85}, 457--486, 1986
 \bibitem{J}{\sc D.M. Jackson}, {\it Some combinatorial problems associated with products of conjugacy classes of the symmetric group},
 J. Combinatorial Theory (A) {\bf 49}, 363--369, 1988

 \bibitem{L}{\sc B. Lass}, {\it D\'emonstration combinatoire de la formule de Harer-Zagier}, 
C. R. Acad. Sci. Paris, {\bf 333}, S\'erie I, 155--160, 2001
\bibitem{GN}{\sc I.P. Goulden and A. Nica}, {\it A direct bijection for the Harer-Zagier formula}, 
J. Combinatorial Theory (A) {\bf 111}, 224--238, 2005
\bibitem{GS}{\sc A. Goupil, G. Schaeffer}, {\it Factoring n-cycles and counting maps of given genus}, 
Europ. J. Combinatorics, {\bf 19(7)}, 819-834, 1998
\bibitem{SV}{\sc G. Schaeffer and E. Vassilieva}, {\it A bijective proof of Jackson's formula for the number of 
 factorizations of a cycle},  J. Combinatorial Theory Series A, {\bf 115(6)}, 903--924, 2008
 \bibitem{LZ}{\sc S.K. Lando and A.K. Zvonkin}, {\it Graphs on surfaces and their applications},
 Encyclopaedia of Mathematical Sciences {\bf 141}, Springer-Verlag, Berlin, 2004 with an Appendix by D.B. Zagier
  \bibitem{BMS}{\sc M. Bousquet-Melou and G. Schaeffer}, {\it Enumeration of planar constellations}, Advances in Applied Mathematics, {\bf 24}, {337--368}, 2000 
 \bibitem{SW}{\sc D. Stanton and D. White}, {\it Constructive combinatorics}, Springer-Verlag, New York, 1986
 \bibitem{GJ}{\sc D.M. Jackson and I.P. Goulden}, {\it Combinatorial enumeration},
 Wyley Interscience Series in Discrete Math, New York, 1983
 \bibitem{CM}{\sc R. Cori, A. Machi}, {\it Maps, hypermaps and their automorphisms: a survey I, II, III},
Expositiones Math., {\bf 10}, 403--427, 429--447, 449--467, 1992
\bibitem{S}{\sc G. Schaeffer}, {\it Conjugaison d'arbres et cartes combinatoires al\'eatoires},
 Ph.D. Thesis, l'Universit\'e Bordeaux I, 1998
 \end{thebibliography}

\end{document}